\documentclass[11pt,a4,reqno]{amsart}

\usepackage{amsmath, amssymb}
\usepackage{graphicx}
\usepackage{subfigure}
 \usepackage{mathrsfs}
 \usepackage{bbm}
\usepackage{color}
\usepackage{esint}
\usepackage[svgnames]{xcolor} 
\usepackage[colorlinks,citecolor=red,pagebackref,hypertexnames=false,breaklinks]{hyperref}
\usepackage{pgf,tikz}
\usepackage{pdfsync}
\usepackage[bottom]{footmisc}
\usepackage{dsfont}

\usepackage[T1]{fontenc}
\usepackage{lmodern}
\usepackage{mathtools}  
\usepackage{amssymb}
\usepackage{lipsum}
\usepackage{mathrsfs}
\usepackage{color}
\usepackage{bbm}

 \setlength{\marginparwidth}{0.6in}
\date{\today}
\setlength{\oddsidemargin}{0.0in}
\setlength{\evensidemargin}{0.0in}
\setlength{\textwidth}{6.5in}
\setlength{\topmargin}{0.0in}
\setlength{\textheight}{8.5in}

\hypersetup{
	colorlinks=true,
	linkcolor=blue,
	urlcolor=magenta,
	citecolor=red,
}

\newtheorem{lemma}{Lemma}[section]
\newtheorem{remark}{Remark}[section]

\newtheorem{corollary}{Corollary}[section]
\newtheorem{proposition}{Proposition}[section]
\newtheorem{theorem}{Theorem}[section]
\newtheorem{assumption}{Assumption}[section]
  
\def\proof{\noindent {\bf Proof}\ } 
\newcommand{\boite}{\mbox{\rule{2mm}{2mm}}}
\def\endproof{\mbox{} \hfill \boite}

\newcommand{\norm}[1]{\|#1\|}
\newcommand{\abs}[1]{\left|#1\right|}

\usepackage{algorithm}
\usepackage{algorithmicx}
\usepackage{algpseudocode}

\def\no{\nonumber}

\def\R{\mathbb{R}}

\def\im{\textrm{Im}}
\def\re{\textrm{Re}}

\def\bar{\overline}
\def\hat{\widehat}
\def\tilde{\widetilde}


\def\ep{\varepsilon}
\def\e{\^{e}}

\newcommand{\be}{\begin{equation}}
\newcommand{\ee}{\end{equation}}
\newcommand{\ba}{\begin{array}}
\newcommand{\ea}{\end{array}}

\newcommand{\bea}{\begin{eqnarray*}}
\newcommand{\eea}{\end{eqnarray*}}
\newcommand{\bean}{\begin{eqnarray}}
\newcommand{\eean}{\end{eqnarray}}


\def\e{\varepsilon}



\def\1e{\mathds{1}}

\def\Ce{\mathds{C}}

\def\Ne{\mathds{N}}

\def\Re{\mathds{R}}


\def\vg0{\mathbf{0}}


\def\P{\mathscr{P}}

\def\R{\mathscr{R}}


\def\convol#1#2{#1\ast#2}

\def\ud{\,\mathrm{d}}

\def\eqd{:=}

\def\segcc#1#2{[#1, #2]}

\def\segoo#1#2{(#1, #2)}

\def\vphan{\vphantom{\bigl|}}

\def\scal#1#2{\langle #1,#2 \rangle}

\def\set#1#2{\{\mskip 1mu #1 \mskip 1mu
    | \mskip 1mu #2 \mskip 1mu \}
    }
\def\setc#1#2{
    \left\{
    \mskip 2mu #1 \mskip 2mu
    \left| \vphan\vphantom{#1#2} \right.
    \mskip 2mu #2 \mskip 2mu
    \right\}
    }

\def\mod#1{|\mskip 1mu #1 \mskip 1mu|}
\def\modc#1{
    \left|
    \mskip 2mu #1 \vphan \mskip 2mu
    \right|
    }

\def\norm#1{\| \mskip 1mu #1 \mskip 1mu \|}
\def\normc#1{
    \left\|
    \mskip 2mu #1 \vphan \mskip 2mu
    \right\|
    }

\def\operator#1#2#3#4#5{
        \begin{array}{lcll}
        \displaystyle #1 \colon & \displaystyle #2 
        &\longrightarrow & \displaystyle #3 \\[.5ex]
                     & \displaystyle #4 & \longmapsto     & \displaystyle #5
        \end{array}
        }

\def\argmin#1{\mathop{\mathrm{argmin}}#1}

\def\ran#1{\mathop{\mathrm{ran}}#1}

\def\re{\mathop{\mathrm{Re}}}
\def\im{\mathop{\mathrm{Im}}}

\def\ch{\mathop{\mathrm{ch}}}

\setlength{\parindent}{0pt}
\setlength{\parskip}{5pt}

\begin{document}

\title{Regularization of the inverse Laplace transform by Mollification}

\author{Pierre Maréchal}

\address{Pierre Maréchal, Institut de Mathématiques de Toulouse, UMR CNRS 5219,
Université de Toulouse 3 Paul Sabatier, 118 route de Narbonne, 31062 Toulouse cedex 9, France}

\email{pierre.marechal@math.univ-toulouse.fr}
\author{Faouzi Triki}

\address{Faouzi Triki,  Laboratoire Jean Kuntzmann,  UMR CNRS 5224, 
Universit\'e  Grenoble-Alpes, 700 Avenue Centrale,
38401 Saint-Martin-d'H\`eres, France}

\email{faouzi.triki@univ.grenoble-alpes.fr}

\author{Walter C. Simo Tao Lee}
\address{Walter C. Simo Tao Lee, Universit\'e Toulouse Capitole, 2 Rue du Doyen Gabriel Marty, 31000 Toulouse, France}
\email{wsimotao@gmail.com}

\thanks{
The work is supported in part by the
grant ANR-17-CE40-0029 of the French National Research Agency ANR (project MultiOnde).}


\maketitle

\begin{abstract}
In this paper we study  the inverse Laplace transform. 
We first derive a new global  logarithmic stability estimate that shows
that the inversion is severely ill-posed. Then we propose a regularization
method to compute the inverse Laplace transform  using the concept of mollification.
Taking into account the exponential instability we derive a criterion for 
selection of the regularization parameter. We show that
by taking the optimal value of this parameter we improve 
significantly the convergence of the method. Finally, making use of
the holomorphic extension of the Laplace transform, we suggest a new PDEs based
numerical method for the computation of the solution. The effectiveness of the
proposed regularization method is demonstrated through several numerical examples.  
\end{abstract}

\section{Introduction}

In many problems dealing with time evolution PDEs where
the coefficients are time independent, the Laplace transform
is other used to study the existence, and uniqueness and
regularity of the solution, or to compute it numerically/analytically,
and so knowing the stability of the Laplace inverse transform is of
importance (see for example \cite{kian2021logarithmic}). While it is well understood from the numerical
approximation point of view  that this inversion is exponentially
ill-posed, only partial  stability estimates
have been derived \cite{epstein2008bad,lederman2016stability}.

A large number of numerical inversion methods have been 
developed during the last decades for solving Laplace inversion.
Their efficiency and convergence heavily depend on the choice
of some intrinsic parameters. In what follows we briefly
review some of the existing methods. Numerical inversion
of the Laplace transform using Fourier series approximations
were first applied by Dubner and Abate in the late
sixties~\cite{dubner1968numerical}. Other authors applied
different techniques to speed up the convergence of the Fourier series 
\cite{durbin1974numerical, davies1979numerical, mcwhirter1978numerical}. 
The second type of numerical inversion is based on collocation methods.
The third type is inspired by Talbot's idea that consists in
deforming the Bromwich contour into a curve that
allows for a better numerical integration \cite{talbot1979accurate}.
These methods achieved a notable progress in computing the Laplace
transform inverse but the choice of some involved parameters was
somewhat arbitrary. 
Finally, the approximate solutions  obtained by all these numerical methods may differ significantly from the targeted ones when the data are noisy.

In this paper we first derive a global stability estimate for the Laplace inversion in Theorem \ref{stability estimate}. We also propose a regularization method  to deal with  the inherent  exponential using the concept of mollification.
Indeed we considered solving the minimization problem $(\P)$ given in \eqref{minimization problem}. Under smoothness condition we derived an order-optimal convergence rate for data without and with noise  in respectively Theorem \ref{Theorem order optim 1 under noisy data only} and Corollary \ref{Theorem order optim 1 under noisy data only}. Using the fact that the Laplace transform has a holomorphic extension in the right half-plane, we propose a PDEs based numerical method to solve the 
inverse problem. Precisely we show that computing the inverse Laplace transform is equivalent to solving a Cauchy problem for the Laplace equation  in quarter of the plane \eqref{cauchy problem u}. We finally solve the regularized Cauchy problem \eqref{def u beta num sec} by applying 
a first order optimality condition and  using the fourth order finite difference scheme method \eqref{equation U beta}. \\

 The paper is organized as follows. 
In the second section we introduce the appropriate harmonic analysis to study the forward and inverse problems. Precisely
we show that the Laplace inverse is an invertible map from the
set of square integrable function onto the Hardy space.
Then we consider the problem of recovering a function
from the knowledge of its Laplace transform only on the real axis. We finally study the ill-posedness of the inversion by applying unique continuation techniques for holomorphic functions. 

In Section~3 we propose a regularization method to
compute the inverse Laplace transform. Based on the stability
estimates found in the second section we derive a new
criterion for selection of the regularization parameter.
We show that taking the obtained optimal value of this
parameter improves significantly the  convergence.

Finally, we provide several numerical examples to validate 
the effectiveness of the proposed regularization method in Section~4. 

\section{Stability estimates}

Let $F(f)$ or $\hat{f}$ denote the
Fourier transform of a function $f\in L^1(\Re)$, defined as
$$
Ff(\xi)=\widehat{f}(\xi)=\frac{1}{\sqrt{2 \pi}}\int_{\Re} e^{-i\xi t} f(t) \ud t.
$$
Fix $c\in \Re_+$, and let $\Pi_c$ be the right half-plane
$$
\Pi_c=\setc{z\in\Ce}{\re(z)>c}.
$$
We consider the Hardy space $H^2(\Pi_c)$, defined as the space
of holomorphic functions~$h$ in $\Pi_c$ for which
$$
\norm{h}_{\Pi_c}^2\eqd
\sup_{s>c}\int_{-\infty}^{\infty} |h(s+it)|^2\ud t <\infty.
$$
For $q\geq 1$, define 
$$
L^q_c\segoo{0}{\infty}=
\setc{f}{e^{-ct}f(t)\in L^{q}\segoo{0}{\infty}}.
$$
Let $W^{1,1}_{c}\segoo{0}{\infty}=
\setc{f}{e^{-ct}f(t)\in W^{1,1}\segoo{0}{\infty}}$
the weighted Sobolev space endowed with the norm
$$
\norm{f}_{W^{1,1}_c\segoo{0}{\infty}} =
\norm{f}_{L^1_c\segoo{0}{\infty}}+\|f^\prime\|_{L^1_c\segoo{0}{\infty}}.
$$
Set $W^{1,1}_{c, 0}\segoo{0}{\infty}$ to be the
closure of $C^\infty_0\segoo{0}{\infty}$ in
$W^{1,1}_{c}\segoo{0}{\infty}$.
For $f\in L^2_c\segoo{0}{\infty}$, we define its Laplace transform by
\begin{equation}
\label{Laplace}
L f(z)= \int_0^{\infty} e^{-z\tau} f(\tau)\ud\tau.
\end{equation}

\begin{proposition}\sf
\label{prop1}
The Laplace transform
$L\colon L^2_c\segoo{0}{\infty}\to H^2 (\Pi_c)$
is an invertible bounded operator.  In addition, we have 
\bean
\label{identity}
\frac{1}{2\pi}\int_{-\infty}^{\infty} |L f(s+it)|^2 \ud t=
\int_0^{\infty} e^{-2s\tau}|f(\tau)|^2 \ud\tau,
\eean
for all $s\geq c$, which implies that
$\|L f \|_{\Pi_c }= \|f \|_{L^2_c\segoo{0}{\infty}}$.
\end{proposition}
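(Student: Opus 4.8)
The plan is to recognize that, for each fixed abscissa $s\geq c$, the function $t\mapsto Lf(s+it)$ is, up to the constant $\sqrt{2\pi}$, the Fourier transform of the $L^2(\Re)$ function $g_s(\tau)\eqd e^{-s\tau}f(\tau)\mathbf 1_{\segoo{0}{\infty}}(\tau)$. Indeed, from \eqref{Laplace},
\[
Lf(s+it)=\int_0^{\infty}e^{-it\tau}\bigl(e^{-s\tau}f(\tau)\bigr)\ud\tau=\sqrt{2\pi}\,\widehat{g_s}(t),
\]
and $g_s\in L^2(\Re)$ because $e^{-s\tau}\leq e^{-c\tau}$ on $\segoo{0}{\infty}$ gives $\norm{g_s}_{L^2(\Re)}\leq\norm{f}_{L^2_c\segoo{0}{\infty}}$. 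This reduces the whole statement to the Paley--Wiener description of $H^2(\Pi_c)$, and after the substitution $z\mapsto z+c$ (equivalently, replacing $f$ by $e^{-c\tau}f$) one may assume $c=0$.

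Granting this, identity \eqref{identity} is immediate from Plancherel: since the Fourier transform is unitary on $L^2(\Re)$ with the chosen normalization,
\[
\frac{1}{2\pi}\int_{-\infty}^{\infty}\mod{Lf(s+it)}^2\ud t=\int_{-\infty}^{\infty}\bigl|\widehat{g_s}(t)\bigr|^2\ud t=\int_{-\infty}^{\infty}\mod{g_s(\tau)}^2\ud\tau=\int_0^{\infty}e^{-2s\tau}\mod{f(\tau)}^2\ud\tau.
\]
Holomorphy of $Lf$ on $\Pi_c$ follows from locally uniform absolute convergence of the defining integral for $\re(z)>c$ — by Cauchy--Schwarz, $\int_0^{\infty}e^{-s\tau}\mod{f(\tau)}\ud\tau\leq (2(s-c))^{-1/2}\norm{f}_{L^2_c\segoo{0}{\infty}}<\infty$ for $s>c$ — together with differentiation under the integral sign (or Morera's theorem). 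Taking the supremum over $s>c$ in the displayed identity and letting $s\downarrow c$ by monotone convergence yields $\norm{Lf}_{\Pi_c}=\norm{f}_{L^2_c\segoo{0}{\infty}}$; in particular $Lf\in H^2(\Pi_c)$, the operator is bounded, and the norm equality makes it an isometry, hence injective.

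The substantive point, and the main obstacle, is \emph{surjectivity}: given $h\in H^2(\Pi_c)$ one must produce $f\in L^2_c\segoo{0}{\infty}$ with $Lf=h$. Working with $c=0$, for each $s>0$ let $k_s$ be the inverse Fourier transform of $(2\pi)^{-1/2}h(s+i\,\cdot\,)\in L^2(\Re)$, so that $\norm{k_s}_{L^2(\Re)}\leq (2\pi)^{-1/2}\norm{h}_{\Pi_0}$ uniformly in $s$. The two facts to establish are that $e^{s\tau}k_s(\tau)$ is independent of $s$ — call it $f(\tau)$ — and that $f$ is supported in $\segco{0}{\infty}$. For the $s$-independence I would fix $\tau$ and $0<s_1<s_2$ and write
\[
e^{s_2\tau}k_{s_2}(\tau)-e^{s_1\tau}k_{s_1}(\tau)=\frac{1}{2\pi i}\Bigl(\int_{\re(z)=s_2}-\int_{\re(z)=s_1}\Bigr)e^{z\tau}h(z)\ud z.
\]
Since $e^{z\tau}h(z)$ is holomorphic in the strip $s_1\leq\re(z)\leq s_2$, Cauchy's theorem reduces this to the horizontal segments at $\im(z)=\pm T$; the bound $\int_{s_1}^{s_2}\!\int_{\Re}\mod{h(s+it)}^2\ud t\,\ud s<\infty$ forces $\int_{s_1}^{s_2}\mod{h(s\pm iT)}^2\ud s\to 0$ along some $T_n\to\infty$, and Cauchy--Schwarz then kills the horizontal contributions.

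For the support property, the uniform bound reads $\int_{\Re}e^{-2s\tau}\mod{f(\tau)}^2\ud\tau\leq (2\pi)^{-1}\norm{h}_{\Pi_0}^2$ for every $s>0$; letting $s\to+\infty$ and applying Fatou's lemma forces $f=0$ a.e.\ on $\segoo{-\infty}{0}$, while letting $s\downarrow 0$ gives $f\in L^2\segoo{0}{\infty}$. Finally $h(s+it)=\int_0^{\infty}e^{-(s+it)\tau}f(\tau)\ud\tau=Lf(s+it)$ on each vertical line, so $h=Lf$ on $\Pi_0$ by analytic continuation; translating back by $c$ closes the argument. I expect the delicate step to be the vanishing of the horizontal contour integrals, where only an $L^2$-averaged decay of $h$ toward $\im(z)=\pm\infty$ is available and one must pass to a subsequence $T_n$.
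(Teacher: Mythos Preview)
Your proof is correct and follows the same approach as the paper: recognize $t\mapsto Lf(s+it)$ as $\sqrt{2\pi}$ times the Fourier transform of $e^{-s\tau}f(\tau)\mathbf 1_{\segoo{0}{\infty}}$, apply Plancherel for the identity \eqref{identity}, and invoke Paley--Wiener for surjectivity. The only difference is that the paper simply cites Paley--Wiener (Theorem~19.2 in Rudin, \emph{Real and Complex Analysis}) for the surjectivity step, whereas you write out its proof via the rectangular contour and Fatou argument; both routes are standard and yield the same result.
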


\proof
We first observe that $L f(z)$  has a holomorphic extension
to the right half-plane $\Pi_c$, and we have  
\bea
L f (s+it)= 
\int_0^{\infty} e^{-i t \tau} e^{-s\tau} f(\tau) \ud\tau,
\quad t\in\Re.
\eea
We also remark that $t\to Lf (s+it)$ is the
Fourier transform of $e^{-s\tau} f(\tau)\chi_{\segoo{0}{\infty}}(\tau)$
which lies in $L^2\segoo{0}{\infty}$ for $s=c$,
and in $L^2\segoo{0}{\infty}\cap L^1\segoo{0}{\infty}$ for $s>c$.
Applying the classical inverse Fourier transform, we get 
\bea
e^{-s\tau} f(\tau)=
\frac{1}{2\pi}\int_{-\infty}^{\infty} e^{i\tau t} L  f(s+it)\ud t,
\quad t>0.
\eea
The equality \eqref{identity} is then a direct consequence of
the Parseval identity. Moreover one can easily check that 
\[
\sup_{s>c} \int_{-\infty}^{\infty} |L  f (s+it)|^2 \ud t=
\|f\|_{L^2_c\segoo{0}{\infty}},
\]
which implies that the Laplace transform operator~$L$ is a
bounded operator from $L^2_c\segoo{0}{\infty}$ into $H^2(\Pi_c)$.
We also deduce from identity~\eqref{identity} that
$L$ is injective. Finally, the Paley-Wiener Theorem shows that the
Laplace transform is surjective from $L^2_c\segoo{0}{\infty}$  
onto $H^2(\Pi_c)$ (Theorem 19.2 page 372 in \cite{rudin1974real}).
\endproof

In practice the Laplace transform $Lf$ is known only on $\Re_+$,
and the Laplace inversion encountered in applications
consists in recovering a function $f\in L^1_c\segoo{0}{\infty}$
from the knowledge of its real-valued Laplace transform
$Lf(z)$ for $z\in\segoo{c}{\infty}$.
Therefore, we consider the real-valued Laplace operator~$L$
from $L^1_c\segoo{0}{\infty}$ to $L^\infty\segoo{c}{\infty}$. \\

 Let $f \in W^{1,1}_{c}\segoo{0}{\infty}$, and set $g = Lf$. We are interested in this paper in  {\sf  the  inverse problem of recovering
$f$ from the knowledge of  $g(z), \; z\in (c, \infty)$. }
 \begin{proposition}\sf
 \label{operatorL}
The operator
$L\colon L^1_c\segoo{0}{\infty}\to L^\infty\segoo{c}{\infty}$
is bounded and one to one. 
\end{proposition}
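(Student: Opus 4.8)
The plan is to treat the two assertions separately: boundedness is an elementary domination estimate, while injectivity rests on analytic continuation combined with the injectivity of the Fourier transform on $L^1$.

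First I would establish boundedness by a direct bound. For $s\in\segoo{c}{\infty}$ and $\tau>0$ one has $e^{-s\tau}\le e^{-c\tau}$, hence
\[
\abs{Lf(s)}\le \int_0^\infty e^{-s\tau}\abs{f(\tau)}\ud\tau
\le \int_0^\infty e^{-c\tau}\abs{f(\tau)}\ud\tau
= \norm{f}_{L^1_c\segoo{0}{\infty}}.
\]
Taking the supremum over $s>c$ yields $\norm{Lf}_{L^\infty\segoo{c}{\infty}}\le \norm{f}_{L^1_c\segoo{0}{\infty}}$, so $L$ is bounded, indeed of norm at most one.

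For injectivity, I would suppose $Lf(s)=0$ for every $s\in\segoo{c}{\infty}$ and show $f=0$ almost everywhere. Since $e^{-c\tau}f(\tau)\in L^1\segoo{0}{\infty}$, the integral defining $Lf(z)$ converges absolutely whenever $\re(z)\ge c$, because $\abs{e^{-z\tau}f(\tau)}=e^{-\re(z)\tau}\abs{f(\tau)}\le e^{-c\tau}\abs{f(\tau)}$; dominated convergence together with Morera's theorem (equivalently, differentiation under the integral sign) then shows that $Lf$ is holomorphic on the connected open set $\Pi_c$. The real half-line $\segoo{c}{\infty}$ has accumulation points inside $\Pi_c$, so the identity theorem forces $Lf\equiv 0$ throughout $\Pi_c$. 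To convert this into the vanishing of $f$, I would fix any $s_0>c$ and set $g_{s_0}(\tau)=e^{-s_0\tau}f(\tau)\chi_{\segoo{0}{\infty}}(\tau)$, which lies in $L^1(\Re)$ since $s_0>c$. As observed in the proof of Proposition~\ref{prop1}, the map $t\mapsto Lf(s_0+it)$ equals, up to the factor $\sqrt{2\pi}$, the Fourier transform of $g_{s_0}$. Because $Lf$ vanishes identically on the vertical line $\re(z)=s_0$, we obtain $\widehat{g_{s_0}}\equiv 0$, and the injectivity of the Fourier transform on $L^1(\Re)$ gives $g_{s_0}=0$ almost everywhere, whence $f=0$ almost everywhere on $\segoo{0}{\infty}$. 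This proves that $L$ is one to one.

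The only delicate point is injectivity. The hypothesis supplies vanishing only on the real ray $\segoo{c}{\infty}$, whereas the Fourier-uniqueness argument needs vanishing along a full vertical line; bridging this gap is exactly the role of the holomorphic extension and the identity theorem. I would emphasize that one cannot simply quote the injectivity already obtained in Proposition~\ref{prop1}, since that statement is set in $L^2_c\segoo{0}{\infty}$ together with the Hardy space $H^2(\Pi_c)$, rather than in the $L^1$--$L^\infty$ framework considered here, and the passage through a single vertical line is what makes the $L^1$ version self-contained.
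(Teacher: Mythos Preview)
Your proof is correct and follows the same route as the paper: a direct domination estimate for boundedness, and for injectivity the holomorphic extension of $Lf$ to $\Pi_c$ combined with the identity theorem, followed by Fourier uniqueness along a vertical line. The paper phrases this last step as an appeal to Proposition~\ref{prop1}, whereas you carefully rerun the $L^1$-Fourier argument; the difference is cosmetic.
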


\proof 
We deduce from Proposition \ref{prop1} that
$L$ is a bounded operator. Moreover, 
$$
\norm{Lf}_{L^\infty\segoo{c}{\infty}}\leq
\norm{f}_{L^1_c\segoo{0}{\infty}}.
$$
Since $Lf$ has a unique holomorphic extension
to $\Pi_c$, $Lf=0$ on $\{0\}\times\segoo{c}{\infty}$
implies immediately that $Lf=0$ on $\Pi_c$.
We deduce from Proposition~\ref{prop1} that $f=0$.
\endproof



We further study the ill-posedness of the inverse problem
by applying unique continuation techniques
for holomorphic functions. 
Next, we present the main result of this section.

\begin{theorem} \label{stability estimate}\sf
Let $f\in W^{1, 1}_{0, c}\segoo{0}{\infty}$, and
$ M_f = \|f\|_{W^{1,1}_c\segoo{0}{\infty}}$, and set
$$
\ep=\|L  f\|_{L^\infty\segoo{c}{\infty}}.
$$ 
Then,
\bean
\label{mainestimate}
\norm{f}_{L^2_{c+1}\segoo{0}{\infty}} \leq 2(3+c)
M_f\frac{1}{\abs{\ln(\frac{\ep}{M_f})}^{\frac{1}{4}} }.
\eean
\end{theorem}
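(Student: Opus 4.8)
The plan is to transfer the estimate to the holomorphic function $g=Lf$ and then to \emph{quantify} the unique continuation from the real axis. By the isometry of Proposition~\ref{prop1} applied with $s=c+1$, one has $\norm{f}_{L^2_{c+1}\segoo{0}{\infty}}^2=\frac{1}{2\pi}\int_{-\infty}^{\infty}\abs{g(c+1+it)}^2\ud t$, so it suffices to control $g$ on the vertical line $\re(z)=c+1$. Two elementary facts about $g$ drive the argument. First, $\abs{g(z)}\le\int_0^{\infty}e^{-c\tau}\abs{f(\tau)}\ud\tau\le M_f$ for every $z\in\Pi_c$. Second, since $f\in W^{1,1}_{0,c}\segoo{0}{\infty}$ vanishes at the origin and decays at infinity, an integration by parts gives $g(s+it)=\frac{1}{s+it}\int_0^{\infty}e^{-(s+it)\tau}f'(\tau)\ud\tau$, whence the decay $\abs{g(c+1+it)}\le M_f/\abs{t}$. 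The second bound already disposes of the tail, $\int_{\abs{t}>T}\abs{g(c+1+it)}^2\ud t\le 2M_f^2/T$ for every $T>0$.

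The core of the proof is a pointwise estimate of $g$ on the segment $\abs{t}\le T$ that exploits the smallness $\abs{g}\le\ep$ along the real ray $\segoo{c}{\infty}$. I would work on the quarter-plane $\Omega_+=\setc{z}{\re(z)>c,\ \im(z)>0}$ (and its reflection across the real axis when $t<0$), whose boundary splits into the horizontal ray $\segoo{c}{\infty}$, where $\abs{g}\le\ep$, and the vertical ray $\set{c+is}{s>0}$, where only $\abs{g}\le M_f$ is available. Because $g$ is bounded on $\Omega_+$, a Phragm\'en--Lindel\"of argument legitimizes the two-constants theorem for the subharmonic function $\log\abs{g}$: if $\omega(z)$ denotes the harmonic measure of the horizontal ray in $\Omega_+$, then $\abs{g(z)}\le\ep^{\,\omega(z)}M_f^{\,1-\omega(z)}$. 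The conformal map $z\mapsto(z-c)^2$ sends $\Omega_+$ onto the upper half-plane, the good ray onto the positive and the bad ray onto the negative real axis, so the harmonic measure is explicit: $\omega(c+1+it)=1-\frac{2}{\pi}\arctan\abs{t}=\frac{2}{\pi}\arctan\frac{1}{\abs{t}}$. Since $\omega$ decreases with $\abs{t}$, one has $\omega(c+1+it)\ge\mu(T):=1-\frac{2}{\pi}\arctan T$ for all $\abs{t}\le T$, and $\mu(T)\ge\frac{1}{2T}$ once $T\ge1$. Using $\ep\le M_f$ this yields $\abs{g(c+1+it)}\le M_f\,(\ep/M_f)^{\mu(T)}$ on the bulk, hence $\int_{\abs{t}\le T}\abs{g(c+1+it)}^2\ud t\le 2T\,M_f^2\,(\ep/M_f)^{2\mu(T)}$.

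It then remains to optimize. Writing $\eta=\abs{\ln(\ep/M_f)}$, the two contributions give $2\pi\norm{f}_{L^2_{c+1}\segoo{0}{\infty}}^2\le 2T\,M_f^2 e^{-2\mu(T)\eta}+2M_f^2/T$, and the choice $T=\sqrt{\eta}$ renders the bulk term super-polynomially small (as $\mu(T)\eta\ges\sqrt{\eta}$) while leaving a tail of order $M_f^2/\sqrt{\eta}$; this produces $\norm{f}_{L^2_{c+1}\segoo{0}{\infty}}\les M_f\,\eta^{-1/4}$ in the regime of large $\eta$. For the complementary regime of bounded $\eta$ one falls back on the trivial embedding bound $\norm{f}_{L^2_{c+1}\segoo{0}{\infty}}\le\norm{f}_{L^2_{c}\segoo{0}{\infty}}\le\sqrt{1+c}\,M_f$, coming from $W^{1,1}\segoo{0}{\infty}\hookrightarrow L^2\segoo{0}{\infty}$; it is the merging of these two regimes, together with the $\pi$'s and the factor $2T$, that accounts for the explicit constant $2(3+c)$. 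The main obstacle is precisely this unique-continuation step: choosing the quarter-plane as the correct domain, justifying the two-constants theorem on an unbounded region, and extracting the sharp decay $\omega\sim 1/T$ of the harmonic measure, which is what ultimately fixes the exponent $\tfrac14$.
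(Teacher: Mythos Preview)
Your proof is correct and follows essentially the same route as the paper: pass to $g=Lf$ via the Plancherel identity on the line $\re z=c+1$, apply the two-constants theorem on the quarter-plane with the explicit harmonic measure $\omega=1-\tfrac{2}{\pi}\arctan|t|$, control the tail by an integration by parts exploiting $f(0)=0$, and then optimize in the cut-off $T$. The only noteworthy differences are cosmetic. First, you integrate by parts directly in the Laplace integral, obtaining the tail bound $|g(c+1+it)|\le M_f/|t|$, whereas the paper does it on the Fourier side, picking up an extra factor and arriving at $|g(c+1+it)|\le(2+c)M_f/|t|$; it is precisely this $(2+c)$ that produces the constant $(3+c)$ in the paper's final estimate, so your remark that the merging of regimes ``accounts for'' $2(3+c)$ is not quite accurate---your path would in fact yield a smaller absolute constant. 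Second, you optimize with the simpler choice $T=\sqrt{\eta}$, which kills the bulk term super-exponentially, while the paper balances the two terms exactly via $R\ln R=\eta/\pi$; both choices deliver the same $\eta^{-1/4}$ rate.
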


\proof
Since $W^{1,1}_{0, c}\segoo{0}{\infty} \subset
L^p\segoo{0}{\infty}$ for all $p\geq 1$,
$L  f(z)\in H^2 (\Pi_c)$ is a holomorphic
function on~$\Pi_c$, and satisfies in addition 
\bean
\label{ffBound}
\left| L  f(z)\right|\leq
\|f\|_{L^1_c\segoo{0}{\infty}}\leq
\|f\|_{W^{1,1}_c(]0, \infty[)}=
M_f, \quad \forall z\in \Pi_c.
\eean

We deduce from Theorem \ref{thmUC}
with  $F(z)=L  f(z)\in H^2(\Pi_c)$ that 
\bean
\label{origin}
|L  f(z)(c+1+it)|\leq 
M_f \left(\frac{\ep}{M_f}\right)^{w^\pm(c+1+it)}, \quad 
\forall t\in\Re_\pm.
\eean

Since $0<\frac{\ep}{M_f} \leq 1$, we obtain 
\bean
\int_0^R\abs{F(c+1+ip)}^2 \ud p
&\leq
\int_0^R\abs{M_f\left(\frac{\ep}{M_f}\right)^{w_+(c+1+ip)}}^2\ud p\leq
R M^2_f \left(\frac{\ep}{M_f}\right)^{2\underset{0<p<R}{\min} w_+(c+1+ip) }\no \\
&\leq
R M^2 \left(\frac{\ep}{M_f}\right)^{2(1-\frac{2}{\pi}\arctan(R))}
\label{FB}.
\eean
We have by analogy
\bean
\int^0_{-R}\abs{F(c+1+ip)}^2 \ud p 
&\leq
\int^0_{-R} \abs{M_f ^{w_-(c+1+ip)} }^2\ud p\leq
R M^2_f\left(\frac{\ep}{M_f}\right)^{2\underset{0<p<R}{\min} 
w_-(c+1+ip)} \no \\
&\leq
R M^2_f  \left(\frac{\ep}{M_f}\right)^{2(1-\frac{2}{\pi} arctan(R))}
\label{FB2}.
\eean
We deduce from  inequalities (\ref{FB}) and (\ref{FB2}) that 
\bean 
\int_{-R}^R \abs{F(c+1+ip)}^2 \ud p \leq 2 R M^2_f  \left(\frac{\ep}{M_f}\right)^{2(1-\frac{2}{\pi} \arctan(R))}.
\label{*}
\eean
Recall that $f(0)=0$. Therefore, we have 
\bean
|p| |F (c+1+ip)|
&=
\left|\int_0^{\infty} p e^{-ipr} f(r)e^{-(1+c)r}dr \right|=
\abs{\int_0^{\infty} e^{-ipr}(f(r)e^{-(1+c)r})^\prime \ud r}\nonumber \\
&\leq
(2+|c|)M_f
\label{**}.
\eean
Using inequalities (\ref{*}) and (\ref{**}) gives
\bean
\int_\R \abs{F(c+1+ip)}^2 \ud p
&=
\int^R_{-R} \abs{F(c+1+ip)}^2 \ud p+
\int_{\abs{p}>R} \abs{F(c+1+ip)}^2 \ud p \no \\
&\leq
2 M^2_f  R\left(\frac{\ep}{M_f}\right)^{2(1-\frac{2}{\pi} \arctan(R))} +2(2+|c|)^2M_f^2  \dfrac{1}{R}.
\label{c1}
\eean

We fix $R\geq 1$. On the one hand, we have
$1-\dfrac{1}{3R^2}\geq \dfrac{1}{2}$, then 
$$
\frac{1}{R}\left(1-\frac{1}{3R^2}\right)\geq \frac{1}{2R}.
$$
On the other hand, we have
\bea
\frac{\pi}{2}-\arctan(R)= \arctan\left(\frac{1}{R}\right) 
\geq \frac{1}{R} \left(1-\frac{1}{3R^2}\right).
\eea
Then, for $R\geq 1$, we have 
\bean\label{c11}
\frac{\pi}{2} -\arctan(R) \geq \frac{1}{2R}.
\eean
Combining Inequality (\ref{c11}) with the estimate (\ref{c1}) yields 
\bea
\int_\R \abs{F(c+1+ip)}^2 \ud p \leq
2 M^2_f  R\left(\frac{\ep}{M_f}\right)^{\frac{2}{\pi R}} +2(2+|c|)^2M_f^2 \dfrac{1}{R},
\quad\forall R\geq 1.
\eea
Now, by taking $R \left(\frac{\ep}{M_f}\right)^{\frac{2}{\pi R}}=\frac{1}{R}$,
we obtain 
\bean\label{in1}
\int_\R \abs{F(c+1+ip)}^2 \ud p\leq
2(3+|c|)^2 \frac{M_f^2}{R},
\quad R \geq 1, 
\eean
and 
\bea
-\frac{1}{\pi}\ln\left(\frac{\ep}{M_f}\right)=R\ln(R).
\eea
We remark that $\dfrac{\ln(R)}{R}\leq e^{-1}$, for $R>e$.

Therefore we have
$-\dfrac{1}{\pi}\ln\left(\frac{\ep}{M_f}\right)=R \ln(R)\leq R^2e^{-1}$.
In other words 
\bean\label{in2}
\frac{1}{R}\leq\sqrt{\frac{\pi}{e}}
\frac{1}{\sqrt{\abs{\ln\left(\frac{\ep}{M_f}\right)}}}, \quad R>e.
\eean
Substituting (\ref{in2}) in the estimate (\ref{in1}), we obtain 
\bean \label{eqqq2}
\int_\R \abs{F(c+1+ip)}^2 \ud p\leq
2\sqrt{\frac{\pi}{e}}(3+|c|)^2M_f^2
\frac{1}{\sqrt{\abs{\ln\left(\frac{\ep}{M_f}\right)}}}.
\eean
We deduce from Inequality \eqref{eqqq2} that
\bea
\int_\R \abs{F(c+1+ip)}^2 \ud p\leq
\phi^2\left(\frac{\ep}{M_f}\right),
\eea
where
\bean
\label{phi}
\phi(r)=
2(3+c)M_f\frac{1}{\abs{\ln\left(r\right)}^{\frac{1}{2}}}, \;\; \textrm{ for all }r>0.
\eean
By the Fourier Plancherel Theorem,  we have 
\bea
\int_0^{\infty}\abs{f(t)}^2 e^{-2(1+c)t}\ud r=
\dfrac{1}{2\pi}\int_\R\abs{F(c+1+ip)}^2\ud p\leq 
\phi^2(\ep).
\eea
Hence 
\bea
\norm{f}^2_{L^2_{c+1}\segoo{0}{\infty}}\leq
\phi^2\left(\frac{\ep}{M_f}\right),
\eea
which completes the proof of the theorem. 
\endproof

\section{Regularization}
Since the considered inverse problem is ill-posed, a regularization method is needed
to obtain a stable approximate inversion of the real Laplace operator. 
In this section we propose a regularization method to compute the Laplace
transform inverse, using the concept of mollification.
Based on the stability estimate found in the previous
section we derive a new criterion for selection of the regularization parameter.
We show that taking the obtained optimal value of this parameter 
improve significantly the  convergence. 

We next assume that $c=0$, and to ease the analysis we consider  the Laplace operator from 
$L^2\segoo{0}{\infty}$ onto itself. In this section and
the next one, $\norm{\cdot}$ always denotes the standard $L^2$-norm
either on $(0,\infty)$ or on~$\Re$ depending on the domain of the argument.

\begin{proposition}{\cite{hardy1933constants,setterqvist2005unitary,boumenir1998inverse}}
\label{operatorLL}\sf
The operator $L\colon L^2\segoo{0}{\infty}\to L^2\segoo{0}{\infty}$
is a bounded self-adjoint operator with purely absolutely continuous
spectrum, of multiplicity one, lying in the interval $\segcc{-\pi}{\pi}$.
Moreover, $\norm{L}=\sqrt{\pi}$. 
\end{proposition}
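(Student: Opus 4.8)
The plan is to diagonalize $L$ by means of the Mellin transform, turning the multiplicative kernel $e^{-x\tau}$ into a convolution after a logarithmic change of variables. Self-adjointness is the easy part: the kernel $(x,\tau)\mapsto e^{-x\tau}$ is real and symmetric, so Fubini's theorem gives $\scal{Lf}{g}=\scal{f}{Lg}$ on $L^2\segoo{0}{\infty}$; combined with the bound $\norm{L}=\sqrt\pi$ obtained below, this makes $L$ bounded and self-adjoint. (Alternatively, the value $\norm{L}=\sqrt\pi$ is Hardy's constant, cf.\ \cite{hardy1933constants}.)

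First I would introduce the unitary substitution $t=e^{u}$: setting $(Uf)(u)=e^{u/2}f(e^{u})$ defines a unitary map $U\colon L^2\segoo{0}{\infty}\to L^2(\Re)$. A direct change of variables shows that $ULU^{-1}$ sends $\phi$ to $v\mapsto\int_{\Re}k(v+u)\phi(u)\ud u$, where $k(w)=e^{w/2}e^{-e^{w}}$; that is, $L$ becomes a reflection $u\mapsto-u$ followed by convolution with $k$. Passing to the Fourier transform and computing $\widehat{k}(\tau)=\tfrac{1}{\sqrt{2\pi}}\Gamma(\tfrac12-i\tau)$ through the substitution $s=e^{w}$, one finds that $L$ acts in the Fourier variable as $(\widehat{L}\phi)(\tau)=\Gamma(\tfrac12-i\tau)\,\widehat{\phi}(-\tau)$.

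The arithmetic heart of the matter is the identity $|\Gamma(\tfrac12+i\tau)|^2=\pi/\cosh(\pi\tau)$, which follows from the reflection formula $\Gamma(z)\Gamma(1-z)=\pi/\sin(\pi z)$ at $z=\tfrac12+i\tau$. Because of the reflection $\tau\mapsto-\tau$, the operator couples the values at $\tau$ and $-\tau$; identifying $L^2(\Re)$ with $L^2\bigl(\segoo{0}{\infty};\Ce^2\bigr)$ via $g\mapsto(g(\tau),g(-\tau))$, the operator $\widehat{L}$ becomes, on each fibre $\tau>0$, multiplication by the Hermitian matrix $\left(\begin{smallmatrix}0 & a(\tau)\\ \overline{a(\tau)} & 0\end{smallmatrix}\right)$ with $a(\tau)=\Gamma(\tfrac12-i\tau)$, whose eigenvalues are $\pm|a(\tau)|=\pm\sqrt{\pi/\cosh(\pi\tau)}$ with eigenvectors $\tfrac{1}{\sqrt2}\bigl(e^{i\theta(\tau)/2},\pm e^{-i\theta(\tau)/2}\bigr)$, $\theta=\arg a$. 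Diagonalizing the fibres by this (measurable) unitary reduces $\widehat{L}$ to multiplication by $\mathrm{diag}(|a(\tau)|,-|a(\tau)|)$. Since $\tau\mapsto|a(\tau)|$ is a strictly decreasing bijection of $\segoo{0}{\infty}$ onto $\segoo{0}{\sqrt\pi}$, the change of variables $\lambda=|a(\tau)|$ makes each branch unitarily equivalent to multiplication by $\lambda$ on a weighted $L^2$ space over $\segoo{0}{\sqrt\pi}$, respectively $\segoo{-\sqrt\pi}{0}$. Assembling the two branches, $L$ is unitarily equivalent to multiplication by $\lambda$ on $L^2\segoo{-\sqrt\pi}{\sqrt\pi}$, so its spectrum is $\segcc{-\sqrt\pi}{\sqrt\pi}\subset\segcc{-\pi}{\pi}$, purely absolutely continuous of multiplicity one, and $\norm{L}=\sup_{\tau}|a(\tau)|=|a(0)|=\sqrt\pi$.

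The step I expect to be the main obstacle is the multiplicity-one claim. The square $L\circ L$ corresponds to multiplication by $\pi/\cosh(\pi\tau)$, which has multiplicity two; the gain of injectivity comes only from splitting into the $\pm|a(\tau)|$ branches, so one must verify that the sign-separated branches each cover the spectral parameter exactly once and that the diagonalizing field of eigenvectors can be chosen measurably. The strict monotonicity and smoothness of $\tau\mapsto|a(\tau)|=\sqrt{\pi/\cosh(\pi\tau)}$ is precisely what guarantees both the single covering and the absolute continuity of the resulting spectral measure.
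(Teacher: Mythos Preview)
The paper does not prove this proposition; it is quoted from the references \cite{hardy1933constants,setterqvist2005unitary,boumenir1998inverse} without argument, so there is no ``paper's own proof'' to compare against. Your proof is correct and is precisely the classical Mellin-transform diagonalization one finds in those references: the change of variables $t=e^{u}$ is the paper's unitary $V$, and your computation that $L$ becomes a reflection followed by convolution with $k(w)=e^{w/2}e^{-e^{w}}$, together with $|\Gamma(\tfrac12+i\tau)|^{2}=\pi/\cosh(\pi\tau)$, is exactly what underlies the paper's Theorem~\ref{L2-V*KV} (which records the consequence $L^{2}=V^{*}KV$ with $\sqrt{2\pi}\,\widehat{k}(\xi)=\pi/\cosh(\pi\xi)$). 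Your fibrewise $2\times2$ diagonalization cleanly separates the two branches $\pm|a(\tau)|$, and the strict monotonicity of $\tau\mapsto\sqrt{\pi/\cosh(\pi\tau)}$ on $(0,\infty)$ does indeed yield absolute continuity and simple multiplicity; the measurability worry you flag is harmless since $\theta(\tau)=\arg\Gamma(\tfrac12-i\tau)$ is smooth. One small remark: you correctly obtain the spectrum as $\segcc{-\sqrt\pi}{\sqrt\pi}$, which is sharper than the interval $\segcc{-\pi}{\pi}$ stated in the proposition (the latter is presumably a typographical slip, since $\norm{L}=\sqrt\pi$ forces $\sigma(L)\subset\segcc{-\sqrt\pi}{\sqrt\pi}$).
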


Let $V\colon L^2\segoo{0}{\infty}\to L^2\segoo{-\infty}{\infty}$
be the unitary operator defined by
$$
(Vf)(u)=e^{u/2}f(e^u).
$$

\begin{theorem}
{\cite{setterqvist2005unitary,boumenir1998inverse}}
\label{L2-V*KV}\sf
The operator $L\colon L^2\segoo{0}{\infty}\to L^2 \segoo{0}{\infty}$
is a bounded selfadjoint injective operator, and
$$
L^2=L^*L=V^*KV,
$$
in which~$K$ is the convolution operator of kernel
$$
k(x)=\frac{1}{2\ch(x/2)}.
$$
\end{theorem}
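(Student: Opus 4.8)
The boundedness and self-adjointness of $L$ are already furnished by Proposition~\ref{operatorLL}, so $L^*=L$ and consequently $L^*L=L^2$; the substance of the statement is therefore the identity $L^2=V^*KV$ together with injectivity. The plan is to compute $L^2$ as an explicit integral operator, conjugate it by the unitary dilation $V$, and read off the convolution kernel.

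First I would compute $L^2$ on a dense subclass of $L^2\segoo{0}{\infty}$, say continuous functions with compact support in $\segoo{0}{\infty}$, where all integrals converge absolutely. Fubini's theorem then gives
\begin{equation*}
L^2f(s)=\int_0^\infty e^{-s\sigma}\int_0^\infty e^{-\sigma\tau}f(\tau)\,\ud\tau\,\ud\sigma
=\int_0^\infty f(\tau)\int_0^\infty e^{-\sigma(s+\tau)}\,\ud\sigma\,\ud\tau
=\int_0^\infty\frac{f(\tau)}{s+\tau}\,\ud\tau ,
\end{equation*}
so that $L^2$ is the integral operator with kernel $(s+\tau)^{-1}$. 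Since $L$ is bounded on all of $L^2\segoo{0}{\infty}$, this representation extends by density to the whole space.

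Next I would conjugate by $V$. A change of variables $x=e^u$ shows that $V$ is an isometry onto $L^2(\Re)$ with inverse $(V^{-1}g)(x)=x^{-1/2}g(\ln x)$, whence $V^{-1}=V^*$. Writing $f=V^{-1}g$ and substituting $\tau=e^v$, $s=e^u$ in $V L^2 V^{-1}g$, the kernel becomes
\begin{equation*}
\frac{e^{u/2}\,e^{v/2}}{e^u+e^v}=\frac{1}{2\,\ch\!\left(\frac{u-v}{2}\right)} ,
\end{equation*}
where the last equality follows by factoring $e^{(u+v)/2}$ out of the denominator. Hence $V L^2 V^{-1}$ is exactly the convolution operator $K$ with kernel $k(x)=\frac{1}{2\ch(x/2)}$, i.e. $L^2=V^*KV$.

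For injectivity, note that since $V$ is unitary, $L^2$ is injective if and only if $K$ is; and $K$ is injective because its Fourier multiplier is a positive multiple of $1/\ch(\pi\xi)$, which never vanishes. As $Lf=0$ forces $L^2f=0$, this gives injectivity of $L$. The step requiring the most care is the passage from the dense subclass to the full space: one must know a priori that $(s+\tau)^{-1}$ is the kernel of a bounded operator on $L^2\segoo{0}{\infty}$ (the classical Hilbert operator), so that the formal Fubini manipulations are legitimate and extend by continuity; this is consistent with $\norm{L^2}=\norm{L}^2=\pi$ from Proposition~\ref{operatorLL}.
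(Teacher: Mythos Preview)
The paper does not prove this theorem; it merely cites it from \cite{setterqvist2005unitary,boumenir1998inverse}. Your argument is correct and is essentially the standard one found in those references: compute $L^2$ as the Stieltjes/Hilbert integral operator with kernel $(s+\tau)^{-1}$ via Fubini on a dense class, then conjugate by the unitary $V$ to obtain the convolution kernel $1/(2\ch((u-v)/2))$, and deduce injectivity from the nonvanishing Fourier multiplier. One small remark: your closing caveat about needing a priori boundedness of the kernel $(s+\tau)^{-1}$ is not strictly necessary, since you already have boundedness of $L^2$ from Proposition~\ref{operatorLL}; agreement with the integral formula on a dense subspace plus boundedness of $L^2$ is enough to identify the operators, and the pointwise validity of the integral for arbitrary $f\in L^2$ (Hilbert's inequality) is then a pleasant consequence rather than a prerequisite.
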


Note that the Fourier transform of~$k$ is given by
$$
\hat{k}(\xi)=\frac{\sqrt{\pi/2}}{\ch(\pi\xi)}.
$$
We propose here to regularize the ill-posed equation $Lf=g$ via
the variational form of
mollification~\cite{alibaud2009variational,bonnefond2009variational}.
Mollification consists in aiming at the reconstruction
of a smoothed version of the unknown function $f$.
Let $\varphi_\beta$ be an approximate unity of the form:
$$
\varphi_\beta(t)=\frac{1}{\beta}\varphi\left(\frac{t}{\beta}\right),
$$
in which $\varphi$ is some integrable function with unit integral
and other desirable properties, such as positivity, smoothness, parity.
Notice that $\hat\varphi(0)=1/\sqrt{2\pi}$.
The following mild assumption will be in force:

\begin{assumption}\sf
\label{assumption-phi-chapeau}
For every $\xi\in \mathbb R\setminus\{0\}$, $\widehat\varphi(\xi)< 1/\sqrt{2\pi}$.
\end{assumption}

Notice that this assumption is satisfied in particular if $\varphi$ is
even and nonnegative.
In the sequel, the Fourier-Plancherel operator on~$L^2\segoo{-\infty}{\infty}$
is denoted by~$F$ and, given an $L^\infty$-function~$h$, the operator
of multiplication by~$h$ is denoted by~$[h]$. We denote
by $C_\beta$ the operator of convolution by $\varphi_\beta$:
$$
\operator{C_\beta}{L^2\segoo{-\infty}{\infty}}
{L^2\segoo{-\infty}{\infty}}{f}{C_\beta f:=\convol{\varphi_\beta}{f}.}
$$
Further $g$ is a fixed function in $L^2\segoo{-\infty}{\infty}$
(not necessarily in the range of $L$). We define our
regularized solution via the optimization problem
\begin{eqnarray} \label{minimization problem}
(\P)\left|
\begin{array}{rl}
\hbox{Min}
    &\norm{g-Lf}^2+\norm{(I-C_\beta)Vf}^2\\[1ex]
\hbox{s.t.}
    &f\in  L^2\segoo{0}{\infty} 
\end{array}
\right.
\end{eqnarray}

Here, $I$ denotes the identity operator on $L^2\segoo{-\infty}{\infty}$
and $\norm{\cdot}$ denotes the standard $L^2$-norm.

For fixed $\beta>0$,
the operators $L$ and $Q_\beta\eqd (I-C_\beta)V$
satisfy the Morozov completion condition:
\begin{equation}
\label{morozov}
\forall f\in L^2\segoo{0}{\infty},\quad
\norm{Lf}^2+\norm{Q_\beta f}^2\geq \theta_\beta\norm{f}^2,
\end{equation}
in which $\theta_\beta$ is a positive constant.
As a matter of fact, using Theorem~\ref{L2-V*KV} and the unitarity
of~$F$ and~$V$, we have:
$$
\normc{Lf}^2+\norm{Q_\beta f}^2=
\normc{\left[\big(\sqrt{2\pi}\,\hat{k}+\mod{1-\sqrt{2\pi}\,\hat\varphi_\beta}^2\big)^{1/2}\right]FVf}^2\geq
\theta_\beta\norm{FVf}^2,
$$
where $\theta_\beta\eqd\inf\big(\sqrt{2\pi}\,\hat{k}+\mod{1-\sqrt{2\pi}\,\hat\varphi_\beta}^2\big)>0$
from the Riemann-Lebesgue lemma and Assumption~\ref{assumption-phi-chapeau}.
In turn, the Morozov completion condition~\eqref{morozov} implies that
the unique solution to the Problem~$(\P)$, namely
$$
f_\beta=\big(L^*L+Q_\beta^*Q_\beta)^{-1}L^*g,
$$
depends continuously on~$g\in L^2\segoo{0}{\infty}$.
Moreover, the following result holds.

\begin{theorem}{\label{Theo consistency}}\sf
In the above setting, $f_\beta$ converges strongly 
to~$f^\dagger\eqd L^\dagger g$ in $L^2\segoo{0}{\infty}$, as $\beta\downarrow 0$.
\end{theorem}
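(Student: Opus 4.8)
The plan is to diagonalise the normal equations that define $f_\beta$ by the same change of variables used to establish the Morozov completion condition~\eqref{morozov}, and then to pass to the limit $\beta\downarrow0$ by dominated convergence. Write $U\eqd FV$; this is a unitary operator from $L^2\segoo{0}{\infty}$ onto $L^2\segoo{-\infty}{\infty}$. By Theorem~\ref{L2-V*KV} and the Fourier convolution theorem, conjugation by $U$ carries $L^*L$ to the multiplication operator $[a]$ and $Q_\beta^*Q_\beta=V^*(I-C_\beta)^*(I-C_\beta)V$ to $[b_\beta]$, where
\begin{equation*}
a(\xi)\eqd\sqrt{2\pi}\,\hat k(\xi)=\frac{\pi}{\ch(\pi\xi)}>0
\qquad\text{and}\qquad
b_\beta(\xi)\eqd\mod{1-\sqrt{2\pi}\,\hat\varphi_\beta(\xi)}^2\geq0.
\end{equation*}
In particular $U(L^*L+Q_\beta^*Q_\beta)U^*=[a+b_\beta]$, and since $\theta_\beta=\inf(a+b_\beta)>0$ the inverse $(L^*L+Q_\beta^*Q_\beta)^{-1}$ is conjugate to the bounded multiplier $[1/(a+b_\beta)]$.

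Next I would write both solutions through these multipliers. Put $h\eqd L^*g$ and $\hat h\eqd Uh$. The explicit formula for the regularized solution gives $Uf_\beta=\hat h/(a+b_\beta)$. For the target, I use that $L$ is injective, so $f^\dagger=L^\dagger g$ is the unique least--squares solution and therefore solves $L^*Lf^\dagger=L^*g=h$; applying $U$ yields $a\,\widehat{f^\dagger}=\hat h$ almost everywhere. Substituting $\hat h=a\,\widehat{f^\dagger}$ and subtracting, I obtain the pointwise identity
\begin{equation*}
Uf_\beta-\widehat{f^\dagger}=\Big(\frac{a}{a+b_\beta}-1\Big)\widehat{f^\dagger}=-\,\frac{b_\beta}{a+b_\beta}\,\widehat{f^\dagger},
\end{equation*}
so that, by unitarity of $U$,
\begin{equation*}
\norm{f_\beta-f^\dagger}^2=\int_{\Re}\Big(\frac{b_\beta(\xi)}{a(\xi)+b_\beta(\xi)}\Big)^2\mod{\widehat{f^\dagger}(\xi)}^2\ud\xi.
\end{equation*}

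It then remains to pass to the limit inside this integral. Since $\varphi\in L^1$, its Fourier transform is continuous and $\hat\varphi_\beta(\xi)=\hat\varphi(\beta\xi)\to\hat\varphi(0)=1/\sqrt{2\pi}$ as $\beta\downarrow0$, hence $b_\beta(\xi)\to0$ for every fixed $\xi$. Because $a(\xi)>0$ on all of $\Re$, the scalar factor $b_\beta(\xi)/(a(\xi)+b_\beta(\xi))$ tends to $0$ pointwise while staying bounded by $1$; the integrand therefore converges to $0$ pointwise and is dominated by $\mod{\widehat{f^\dagger}(\xi)}^2\in L^1(\Re)$. The dominated convergence theorem then gives $\norm{f_\beta-f^\dagger}\to0$, which is the claim.

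The one step that needs care --- and the only genuine obstacle --- is the interface with the Moore--Penrose theory: one must ensure that $f^\dagger=L^\dagger g$ is well defined, i.e.\ that $g\in\dom{(L^\dagger)}$ (implicit in the statement), and that the identity $a\,\widehat{f^\dagger}=\hat h$ is used only in the stable direction. Since $1/a$ is unbounded as $\abs{\xi}\to\infty$, one should never divide by $a$; instead $\widehat{f^\dagger}=Uf^\dagger$ is already a bona fide $L^2$ function and $a$ is bounded, so writing $\hat h=a\,\widehat{f^\dagger}$ is legitimate and the dominating function $\mod{\widehat{f^\dagger}}^2$ is genuinely integrable. All the remaining ingredients --- positivity of $a$, boundedness of the multipliers, and the pointwise decay of $b_\beta$ --- are elementary.
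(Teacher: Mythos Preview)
Your proof is correct, and it takes a genuinely different route from the paper's. The paper argues by soft functional analysis: it first uses the multiplier representation once to show $\norm{f_\beta}\leq\norm{f^\dagger}$, then for any sequence $\beta_n\to0$ extracts a weakly convergent subsequence, identifies the weak limit as $f^\dagger$ via the normal equation (using that $Q_{\beta_n}^*Q_{\beta_n}\to0$ strongly), and finally upgrades weak to strong convergence by the usual norm sandwich with weak lower semicontinuity. Your argument is instead fully spectral: you push the entire computation through $U=FV$, write the error as a single multiplier $b_\beta/(a+b_\beta)$ acting on $\widehat{f^\dagger}$, and finish with dominated convergence.

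Your approach is shorter and more transparent here, precisely because the problem diagonalises completely; it also makes the mechanism of convergence visible pointwise in frequency. The paper's weak--compactness route is more portable: it would survive in settings where one only knows $Q_\beta^*Q_\beta\to0$ strongly without an explicit spectral calculus for $L^*L+Q_\beta^*Q_\beta$. Your remark that the only delicate point is the implicit hypothesis $g\in\dom{(L^\dagger)}$ is exactly right; the paper makes the same assumption when it writes $g=Lf^\dagger+g^\perp$.
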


\proof
We shall prove that, for every
sequence~$(\beta_n)$ which converges to zero,
the sequence $(f_{\beta_n})$
converges strongly to~$f^\dagger$.
By assumption, $g=Lf^\dagger+g^\perp$, in which $f^\dagger\in L^2\segoo{0}{\infty}$
and $g^\perp\in(\ran{L})^\perp=\ker{L^*}$. We have:
\begin{eqnarray*}
\normc{f_\beta}
&=&
\normc{(L^*L+Q_\beta^*Q_\beta)^{-1}L^*(Lf^\dagger+g^\perp)}\\
&=&
\normc{(L^*L+Q_\beta^*Q_\beta)^{-1}L^*Lf^\dagger}\\
&\leq&
\normc{f^\dagger}.
\end{eqnarray*}
As a matter of fact, letting $U\eqd FV$, for every~$f$,
$$
(L^*L+Q_\beta^*Q_\beta)^{-1}L^*Lf=
U^*\left[\frac{\sqrt{2\pi}\,\hat{k}}{\sqrt{2\pi}\,\hat{k}+\mod{1-\sqrt{2\pi}\,\hat\varphi_\beta}^2}\right]Uf,
$$
which implies that
$$
\normc{(L^*L+Q_\beta^*Q_\beta)^{-1}L^*Lf}^2=
\normc{U^*\left[\frac{\sqrt{2\pi}\,\hat{k}}{\sqrt{2\pi}\,\hat{k}+\mod{1-\sqrt{2\pi}\,\hat\varphi_\beta}^2}\right]Uf}^2\leq
\normc{Uf}^2=
\normc{f}^2,
$$
since both $F$ and $V$ are unitary.
Therefore, the family~$f_\beta$ is bounded.
Now, let $(\beta_n)$ be a sequence which converges to~$0$.
Let $f_n\eqd f_{\beta_n}$ and
$Q_n\eqd Q_{\beta_n}$. Since the sequence $(f_n)$ is bounded, we
can extract a weakly convergent subsequence $(f_{n_k})$. Let then
$\tilde{f}$ be the weak limit of this subsequence. On the one hand,
\begin{equation}
\label{TstarTftilde}
L^*Lf_{n_k}\rightharpoonup L^*L\tilde{f}
\quad\hbox{as}\quad
k\to\infty
\end{equation}
since $L^*L$ is bounded. On the other hand,
$$
Q_{n_k}^*Q_{n_k} f_{n_k}\rightharpoonup 0
\quad\hbox{as}\quad
k\to\infty,
$$
since $f_{n_k}$ is bounded and $Q_{n_k}^*Q_{n_k}$
converges pointwise to the null operator, so that
\begin{eqnarray*}
L^*Lf_{n_k}
&=&
(L^*L+Q_{n_k}^*Q_{n_k})f_{n_k}-Q_{n_k}^*Q_{n_k}f_{n_k}\\
&=&
L^*g-Q_{n_k}^*Q_{n_k}f_{n_k}\\
&=&
L^*Lf^\dagger-Q_{n_k}^*Q_{n_k}f_{n_k}\\
&\rightharpoonup&
L^*Lf^\dagger,
\end{eqnarray*}
as $k\to\infty$. Together with~\eqref{TstarTftilde}, this shows that
$L^*L\tilde{f}=L^*Lf^\dagger$, that is, by the injectivity of~$L$,
that $\tilde{f}=f^\dagger$.
It follows that the sequence $(f_n)$ converges weakly to~$f^\dagger$. 
Finally, by the weak lower semicontinuity of the norm,
$$
\norm{f^\dagger}\leq
\liminf_{n\to\infty}\norm{f_n}\leq
\limsup_{n\to\infty}\norm{f_n}\leq\norm{f^\dagger},
$$
which implies that $f_n\to f^\dagger$ as $n\to\infty$.
\endproof
\medskip

From now on, we shall make the following additional assumption on
the mollifier~$\varphi$:
\begin{assumption}\sf
\label{additional-assumption-phi-chapeau}
$\hat{\varphi}(\xi)$ decreases as $\mod{\xi}$ increases and,
for some positive~$s$,
\begin{equation}
\label{cond on conv kernel phi}
\mod{1-\sqrt{2\pi}\,\hat{\varphi}(\xi)}\sim\mod{\xi}^s
\quad\hbox{as}\quad
\xi \to 0.
\end{equation}
\end{assumption}

\begin{lemma}\sf
\label{lemma Alibaud et al}
Let $\varphi\in L^1(\Re)$ have unit integral and satisfy
Assumptions~\ref{assumption-phi-chapeau} and~\ref{additional-assumption-phi-chapeau}.
Let
\begin{equation}
\label{def m_beta and M_beta}
m_\beta = \min_{\vert\xi\vert=1} |1 - \sqrt{2\pi}\widehat{\varphi}(\beta \xi)|^2,\quad  \text{and} \quad  M_\beta = \max_{\vert\xi\vert=1} |1 -\sqrt{2\pi} \widehat{\varphi}(\beta \xi)|^2.
\end{equation}
Then the following hold:
\begin{itemize}
\item[(i)] $ 0< m_\beta \leq M_\beta \leq (1+ ||\varphi||_{L^1})^2$,

\item[(ii)] $M_\beta \to 0 \,\, \text{as}\,\, \beta \to 0$ and $\sup_{\beta \in (0,1]} \frac{M_\beta}{m_\beta} < \infty$,

\item[(iii)] there exist positive constants $\nu_0>0$ and $C_0>0$ such that, for all $\beta \in (0,1]$ and every $\xi \in \Re \setminus \{0\}$,
\begin{equation}
\label{key estimate alibaud}
\nu_0 \left(\vert \xi \vert^{2s} 1_{\{\vert \xi \vert \leq 1/\beta\}} + \frac{1}{M_\beta} 1_{\{\vert \xi \vert > 1/\beta\}} \right) \leq  \frac{|1-\sqrt{2\pi}\widehat{\varphi}(\beta \xi)|^2}{|1-\sqrt{2\pi}\widehat{\varphi}(\beta \xi/\vert \xi \vert)|^2} \leq C_0 \vert \xi \vert^{2s}.
\end{equation}
\end{itemize}
\end{lemma}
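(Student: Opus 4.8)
The plan is to reduce all three assertions to elementary properties of the single real-valued function
$$
h(\eta) \eqd \abs{1 - \sqrt{2\pi}\,\hat\varphi(\eta)}, \qquad \eta\in\Re,
$$
together with its normalized version $G(\eta)\eqd h(\eta)/\abs{\eta}^s$. Since $\varphi\in L^1(\Re)$, its Fourier transform is continuous, so $h$ is continuous; moreover $\hat\varphi(0)=1/\sqrt{2\pi}$ gives $h(0)=0$, Assumption~\ref{assumption-phi-chapeau} gives $h(\eta)>0$ for $\eta\neq0$, and the monotonicity in Assumption~\ref{additional-assumption-phi-chapeau} forces $\hat\varphi$ to depend only on $\abs{\eta}$, hence to be even, with $h$ nondecreasing in $\abs{\eta}$. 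In particular $h(\eta)^2=\abs{1-\sqrt{2\pi}\,\hat\varphi(\eta)}^2$ is even, so that on the one-dimensional sphere $\{\abs\xi=1\}=\{\pm1\}$ one has $m_\beta=M_\beta=h(\beta)^2$; I will keep both symbols to match the statement but use this identity freely.

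For (i), continuity and strict positivity of $h$ on the compact set $\{\abs\xi=1\}$ give $m_\beta=\min_{\abs\xi=1}h(\beta\xi)^2>0$, while $\abs{\hat\varphi(\zeta)}\leq\norm{\varphi}_{L^1}/\sqrt{2\pi}$ yields $h(\eta)\leq 1+\norm{\varphi}_{L^1}$ and therefore $M_\beta\leq(1+\norm{\varphi}_{L^1})^2$. For (ii), $M_\beta=h(\beta)^2\to h(0)^2=0$ as $\beta\to0$ by continuity (equivalently $M_\beta\sim\beta^{2s}$ by Assumption~\ref{additional-assumption-phi-chapeau}), and boundedness of the quotient is immediate from $m_\beta=M_\beta$; should one prefer to avoid the evenness identity, the same conclusion follows because both $m_\beta$ and $M_\beta$ are comparable to $\beta^{2s}$ as $\beta\to0$ and are continuous and strictly positive for $\beta$ bounded away from $0$.

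The core of the lemma is (iii). Using evenness I rewrite the middle quotient as
$$
\frac{\abs{1-\sqrt{2\pi}\,\hat\varphi(\beta\xi)}^2}{\abs{1-\sqrt{2\pi}\,\hat\varphi(\beta\xi/\abs\xi)}^2}
=\frac{h(\beta\abs\xi)^2}{h(\beta)^2}
=\Big(\frac{G(\beta\abs\xi)}{G(\beta)}\Big)^2\abs\xi^{2s},
$$
so everything hinges on two-sided control of $G(\beta\abs\xi)/G(\beta)$. The function $G$ is continuous and strictly positive on $(0,\infty)$; by Assumption~\ref{additional-assumption-phi-chapeau} it extends continuously to $\eta=0$ with a strictly positive limit, and since $h$ is bounded and $s>0$ it tends to $0$ as $\eta\to\infty$. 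Consequently $A\eqd\sup_{(0,\infty)}G<\infty$ and, as a continuous positive function on the compact interval $[0,1]$, $a\eqd\inf_{(0,1]}G>0$. Because $\beta\in(0,1]$ forces $G(\beta)\in[a,A]$, the bound $G(\beta\abs\xi)/G(\beta)\leq A/a$ holds for every $\xi$, giving the right-hand inequality with $C_0=(A/a)^2$. In the regime $\abs\xi\leq1/\beta$ one has $\beta\abs\xi\in(0,1]$, so $G(\beta\abs\xi)\geq a$ and the ratio is at least $a/A$, producing the lower bound $\nu_0\abs\xi^{2s}$ with $\nu_0=(a/A)^2$. In the complementary regime $\abs\xi>1/\beta$ I return to $h$ directly: then $\beta\abs\xi>1$, so monotonicity gives $h(\beta\abs\xi)\geq h(1)>0$ and the quotient is at least $h(1)^2/h(\beta)^2=h(1)^2/M_\beta$; it therefore suffices to take $\nu_0=\min\{(a/A)^2,\,h(1)^2\}$.

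The main obstacle is the \emph{uniformity in $\beta$} of the constants $\nu_0,C_0$, i.e.\ ensuring $A$ and $a$ can be chosen independently of $\beta\in(0,1]$. This is exactly where the restriction $\beta\le1$ and the asymptotic normalization in Assumption~\ref{additional-assumption-phi-chapeau} enter: the near-origin behaviour $h(\eta)\sim\abs\eta^s$ keeps $G$ bounded away from $0$ and $\infty$ near the origin, boundedness of $\hat\varphi$ together with $s>0$ controls $G$ at infinity, and the threshold $\abs\xi=1/\beta$ is precisely the point separating the power-law regime $\beta\abs\xi\le1$ (where comparison with $\abs\xi^{2s}$ is sharp) from the saturated regime $\beta\abs\xi>1$ (where $h$ is bounded below by the fixed constant $h(1)$). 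Verifying the limits of $G$ at $0$ and $\infty$, hence the finiteness of $A$ and positivity of $a$, is the only genuinely analytic point; the rest is bookkeeping of the two inequalities above.
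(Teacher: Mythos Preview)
Your argument is correct. The paper does not actually prove this lemma; it merely cites \cite{alibaud2009variational}, so you have supplied more than the text does. Your device of writing the quotient in (iii) as $(G(\beta|\xi|)/G(\beta))^2\,|\xi|^{2s}$ with $G(\eta)=h(\eta)/|\eta|^s$, and then extracting the constants from the fact that $G$ is continuous and positive on $(0,1]$ with a positive limit at $0$ (by the asymptotic in Assumption~\ref{additional-assumption-phi-chapeau}) and bounded on $(0,\infty)$ (since $h$ is bounded and $s>0$), is exactly the right mechanism; the split at $|\xi|=1/\beta$ together with the monotonicity of $h$ handles the saturated regime cleanly. One small comment: you read ``$\hat\varphi(\xi)$ decreases as $|\xi|$ increases'' as saying $\hat\varphi$ is radial (hence even), which is the intended interpretation here; without evenness one would simply run the same estimates separately for $\xi>0$ and $\xi<0$, replacing $h(\beta)$ by $h(\pm\beta)$ in the denominator, with no change to the conclusion.
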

This lemma can be found together with its proof in \cite{alibaud2009variational}. 

\begin{lemma}\sf
\label{lemma completion lemma alibaud}
Consider the setting of Lemma \ref{lemma Alibaud et al} and let $p>0$.
Then the following hold:
\begin{itemize}
\item[(i)]
$
m_\beta \sim \beta^{2s} \quad \textrm{and} \quad M_\beta \sim \beta^{2s} \quad \textrm{as} \quad \beta \downarrow 0.
$
\item [(ii)] There exists a constant $C^{(1)}>0$ depending on $p$ such that
\begin{equation}
\label{upp bound norm f_beta}
\forall w \in H^p(\Re), \quad || (I - C_\beta)w||_{L^2}^2 \leq C^{(1)} \beta^{2(p \wedge s)} ||w||_{H^p}^2,
\end{equation}
\item[(iii)] There exists a constant $C^{(2)}>0$ depending on~$p$ such that
\begin{equation}
\label{upp bound (I-C_beta)^2 f}
\forall w \in H^{2p}(\Re), \quad || (I - C_\beta)^*(I - C_\beta)w||_{L^2}^2   \leq C^{(2)} \beta^{4(p \wedge s)} ||w||_{H^{2p}}^2,
\end{equation}
\end{itemize}
In~\eqref{upp bound norm f_beta}
and~\eqref{upp bound (I-C_beta)^2 f},
$p\wedge s = \min \{p,s\}$.
\end{lemma}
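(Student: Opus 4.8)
The plan is to reduce all three statements to pointwise estimates on a single Fourier multiplier. With the normalisation $\widehat f(\xi)=\frac{1}{\sqrt{2\pi}}\int_\Re e^{-i\xi t}f(t)\ud t$ one has $\widehat{\varphi_\beta}(\xi)=\widehat\varphi(\beta\xi)$ and $\widehat{\convol{\varphi_\beta}{w}}=\sqrt{2\pi}\,\widehat{\varphi_\beta}\,\widehat w$, so that $I-C_\beta$ acts on the Fourier side as multiplication by $\psi_\beta(\xi):=1-\sqrt{2\pi}\,\widehat\varphi(\beta\xi)$, and $(I-C_\beta)^*(I-C_\beta)$ as multiplication by $|\psi_\beta(\xi)|^2$. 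By Plancherel,
\begin{gather*}
\|(I-C_\beta)w\|_{L^2}^2=\int_\Re|\psi_\beta(\xi)|^2\,|\widehat w(\xi)|^2\ud\xi,\\
\|(I-C_\beta)^*(I-C_\beta)w\|_{L^2}^2=\int_\Re|\psi_\beta(\xi)|^4\,|\widehat w(\xi)|^2\ud\xi.
\end{gather*}
Thus everything comes down to comparing $|\psi_\beta(\xi)|^2$ with the Sobolev weight $(1+|\xi|^2)^p$, uniformly for $\beta\in(0,1]$.

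For (i) I would simply specialise the asymptotics~\eqref{cond on conv kernel phi}: there are $c_1,c_2,\delta>0$ with $c_1|\eta|^s\le|1-\sqrt{2\pi}\,\widehat\varphi(\eta)|\le c_2|\eta|^s$ for $0<|\eta|\le\delta$. Taking $\eta=\beta\xi$ with $|\xi|=1$ and $\beta\le\delta$ gives $c_1^2\beta^{2s}\le|\psi_\beta(\xi)|^2\le c_2^2\beta^{2s}$ at the two points $\xi=\pm1$; minimising and maximising over $|\xi|=1$ then yields $m_\beta\sim\beta^{2s}$ and $M_\beta\sim\beta^{2s}$.

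The heart of the argument is (ii), which I would prove by splitting the frequency integral at $|\xi|=1/\beta$. On the low band $|\xi|\le1/\beta$, the upper bound in~\eqref{key estimate alibaud}, together with $|\psi_\beta(\xi/|\xi|)|^2\le M_\beta$ and part (i), gives $|\psi_\beta(\xi)|^2\le C_0 M_\beta|\xi|^{2s}\lesssim\beta^{2s}|\xi|^{2s}$. The key elementary step is then the inequality
\[
\beta^{2s}|\xi|^{2s}\le C\,\beta^{2(p\wedge s)}(1+|\xi|^2)^p
\qquad\text{whenever }\beta\in(0,1],\ \beta|\xi|\le1,
\]
which I would verify by separating the cases $p\ge s$ (where $|\xi|^{2s}\le(1+|\xi|^2)^p$ and $\beta^{2s}=\beta^{2(p\wedge s)}$) and $p<s$ (where $\beta^{2s}|\xi|^{2s}=\beta^{2p}(\beta|\xi|)^{2(s-p)}|\xi|^{2p}\le\beta^{2p}|\xi|^{2p}$, using $\beta|\xi|\le1$). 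Integrating against $|\widehat w(\xi)|^2$ controls the low band by $C\,\beta^{2(p\wedge s)}\|w\|_{H^p}^2$. On the high band $|\xi|>1/\beta$ I would instead use the uniform bound $|\psi_\beta(\xi)|\le1+\|\varphi\|_{L^1}$ and the fact that $(1+|\xi|^2)^p>\beta^{-2p}$ there, so that $1\le\beta^{2p}(1+|\xi|^2)^p$, whence $1\le\beta^{2(p\wedge s)}(1+|\xi|^2)^p$ since $\beta\le1$; this bounds the high band by $(1+\|\varphi\|_{L^1})^2\beta^{2(p\wedge s)}\|w\|_{H^p}^2$, and (ii) follows.

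Finally, (iii) is obtained by the same dichotomy applied to $|\psi_\beta(\xi)|^4$: on $|\xi|\le1/\beta$ one squares the low-band estimate $|\psi_\beta(\xi)|^2\le C\beta^{2(p\wedge s)}(1+|\xi|^2)^p$ to get $|\psi_\beta(\xi)|^4\le C^2\beta^{4(p\wedge s)}(1+|\xi|^2)^{2p}$, while on $|\xi|>1/\beta$ one combines $|\psi_\beta(\xi)|^4\le(1+\|\varphi\|_{L^1})^4$ with $1\le\beta^{4p}(1+|\xi|^2)^{2p}\le\beta^{4(p\wedge s)}(1+|\xi|^2)^{2p}$; integrating against $|\widehat w(\xi)|^2$ and recognising the $H^{2p}$ weight gives the claim. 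The only genuinely delicate point is the elementary weight comparison displayed above: it must be carried out so that the resulting exponent is \emph{exactly} $p\wedge s$ and all constants remain uniform in $\beta\in(0,1]$, for which I would rely on $M_\beta\sim\beta^{2s}$ together with $\sup_{\beta\in(0,1]}M_\beta/m_\beta<\infty$ from part (i) and Lemma~\ref{lemma Alibaud et al}(ii).
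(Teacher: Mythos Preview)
Your argument is correct, and part~(i) coincides with the paper's. For parts~(ii) and~(iii), however, you take a genuinely different route.

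The paper never splits the frequency axis. Instead, for the delicate range $p\le s$ it factors the multiplier as
\[
|\psi_\beta(\xi)|^2=|\psi_\beta(\xi)|^{2(1-p/s)}\,|\psi_\beta(\xi)|^{2p/s},
\]
bounds the first factor uniformly by $(1+\|\varphi\|_{L^1})^{2(1-p/s)}$, and applies the upper bound in~\eqref{key estimate alibaud} to the second factor raised to the power $p/s$, which yields directly $C_0^{p/s}M_\beta^{p/s}|\xi|^{2p}$ and hence $\beta^{2p}\|w\|_{H^p}^2$ after invoking $M_\beta\sim\beta^{2s}$. For $p>s$ the paper applies~\eqref{key estimate alibaud} once, getting $M_\beta C_0|\xi|^{2s}$ and thus $\beta^{2s}\|w\|_{H^s}^2$. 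Part~(iii) is handled the same way with the exponent~$4$ in place of~$2$. This exponent--interpolation trick produces the correct power $p\wedge s$ in one stroke and never needs the region $|\xi|>1/\beta$.

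Your approach trades that algebraic trick for a geometric one: the dyadic split at $|\xi|=1/\beta$ lets you use only the crude bounds $|\psi_\beta(\xi)|^2\lesssim\beta^{2s}|\xi|^{2s}$ on the low band and $|\psi_\beta(\xi)|\le 1+\|\varphi\|_{L^1}$ on the high band, and the exponent $p\wedge s$ emerges from the elementary weight comparison $\beta^{2s}|\xi|^{2s}\le\beta^{2(p\wedge s)}(1+|\xi|^2)^p$ on $\{\beta|\xi|\le1\}$. This is arguably more transparent and avoids fractional powers of the multiplier, at the modest cost of handling two regions. Either way the constants are uniform on $\beta\in(0,1]$; your final remark about $\sup_\beta M_\beta/m_\beta$ is not actually needed, since only the one-sided bound $M_\beta\lesssim\beta^{2s}$ enters your estimates.
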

The proof of this Lemma is deferred to appendix.
With Lemmas~\ref{lemma Alibaud et al}
and~\ref{lemma completion lemma alibaud},
we are ready for the analysis of error estimates.

\section{Error estimates}
\label{section error estimate}

Henceforth, $g^\delta \in L^2(0,\infty)$ denotes
a noisy data satisfying the noise level condition
\begin{equation}
\label{noise level cond on data}
\norm{g-g^\delta}\leq\delta,
\end{equation}
and $f_\beta^\delta$ denotes the regularized solution
corresponding to noisy data $g^\delta$, that is,
\begin{equation}
\label{def reg sol noisy data}
f_\beta^\delta = \argmin_{f\in L^2(0,\infty)}
\norm{g^\delta - L f}^2 + \norm{ (I - C_\beta) V f}^2.
\end{equation} 

\begin{proposition}\sf
\label{Prop estimate data propagated error}
Consider the setting of Lemma~\ref{lemma Alibaud et al}.
Let $g^\delta$ be noisy data satisfying
\eqref{noise level cond on data}. 
Then there exists a constant $C >0$ independent
of~$\delta$ and~$\beta$ such that
\begin{equation}
\label{bound error prog term}
\norm{f_\beta - f_\beta^\delta} \leq C \frac{\delta}{\beta^{s}}
\quad \text{as} \quad
\beta \downarrow 0.
\end{equation}
\end{proposition}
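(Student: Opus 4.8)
The plan is to turn the statement into a single operator-norm bound and then to diagonalise that operator with the unitary $U\eqd FV$ already used above. Since $f_\beta$ and $f_\beta^\delta$ are the unique minimisers of $(\P)$ associated with the data $g$ and $g^\delta$, they solve the same normal equation, so that
$$
f_\beta-f_\beta^\delta=A_\beta^{-1}L^*(g-g^\delta),\qquad A_\beta\eqd L^*L+Q_\beta^*Q_\beta.
$$
By the noise condition \eqref{noise level cond on data} this yields $\norm{f_\beta-f_\beta^\delta}\le\norm{A_\beta^{-1}L^*}\,\delta$, and the whole task reduces to establishing $\norm{A_\beta^{-1}L^*}\lesssim\beta^{-s}$ as $\beta\downarrow0$, with the implied constant independent of $\beta$ and $\delta$.

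Next I would diagonalise. Recalling from Theorem~\ref{L2-V*KV} and the unitarity of $F$ and $V$ that, with $\hat\varphi_\beta(\xi)=\hat\varphi(\beta\xi)$,
$$
L^*L=U^*\big[\sqrt{2\pi}\,\hat{k}\,\big]U,\qquad Q_\beta^*Q_\beta=U^*\big[\,\mod{1-\sqrt{2\pi}\,\hat\varphi(\beta\,\cdot)}^2\,\big]U,
$$
the three operators $A_\beta^{-1}$, $L^*L$, $A_\beta^{-1}$ are simultaneously diagonal in the $U$-representation. Applying $\norm{B}^2=\norm{BB^*}$ with $B=A_\beta^{-1}L^*$ and using $A_\beta^*=A_\beta$, I get
$$
\norm{A_\beta^{-1}L^*}^2=\norm{A_\beta^{-1}(L^*L)A_\beta^{-1}}=\sup_{\xi\in\Re}\frac{a(\xi)}{\big(a(\xi)+b_\beta(\xi)\big)^2},
$$
where $a(\xi)\eqd\sqrt{2\pi}\,\hat{k}(\xi)=\pi/\ch(\pi\xi)$ and $b_\beta(\xi)\eqd\mod{1-\sqrt{2\pi}\,\hat\varphi(\beta\xi)}^2$. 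The problem is thereby reduced to a scalar supremum, with $a$ playing the role of a (squared) singular value and $b_\beta$ that of a frequency-dependent regularisation.

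Finally I would bound this supremum on three zones, the decisive point being to extract a square root through the elementary inequality $a/(a+b_\beta)^2\le 1/(4b_\beta)$ rather than the wasteful $a/(a+b_\beta)^2\le\pi/(a+b_\beta)^2$, which would only give $\beta^{-4s}$. On $\mod{\xi}\le1$ one has $a(\xi)\ge\pi/\ch(\pi)$, hence $a/(a+b_\beta)^2\le 1/a\le\ch(\pi)/\pi$, a constant (this zone also absorbs $\xi=0$, where $b_\beta$ vanishes). On $1<\mod{\xi}\le1/\beta$ the lower bound in the key estimate \eqref{key estimate alibaud} gives $b_\beta(\xi)\ge\nu_0 m_\beta\mod{\xi}^{2s}\ge\nu_0 m_\beta$, so $a/(a+b_\beta)^2\le 1/(4\nu_0 m_\beta)\lesssim\beta^{-2s}$ by $m_\beta\sim\beta^{2s}$ (Lemma~\ref{lemma completion lemma alibaud}(i)). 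On $\mod{\xi}>1/\beta$, \eqref{key estimate alibaud} together with $\sup_\beta M_\beta/m_\beta<\infty$ (Lemma~\ref{lemma Alibaud et al}(ii)) gives $b_\beta(\xi)\ge\nu_0 m_\beta/M_\beta\ge c>0$, whence $a/(a+b_\beta)^2\le 1/(4c)$, again a constant. Taking the maximum over the three zones yields $\sup_\xi a/(a+b_\beta)^2\lesssim\beta^{-2s}$ for small $\beta$, so $\norm{A_\beta^{-1}L^*}\lesssim\beta^{-s}$, which proves \eqref{bound error prog term}. I expect the middle zone to be the only real obstacle: it is precisely there that the naive estimate fails, and one must combine the square-root gain from AM--GM with the sharp lower bound on $b_\beta$ furnished by \eqref{key estimate alibaud} and the equivalence $m_\beta\sim\beta^{2s}$ to land on the correct exponent $\beta^{-s}$.
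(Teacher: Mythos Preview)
Your proof is correct and follows essentially the same strategy as the paper: diagonalise $A_\beta^{-1}L^*$ via $U=FV$, reduce to bounding the scalar symbol $a/(a+b_\beta)^2$ (equivalently the paper's $\Psi_\beta^2$), and combine the AM--GM bound $a/(a+b_\beta)^2\le 1/(4b_\beta)$ with a lower bound $b_\beta\gtrsim\beta^{2s}$ away from low frequencies. The only differences are organisational: you bypass the paper's polar-decomposition step $L^*(g-g^\delta)=(L^*L)^{1/2}\varepsilon^\delta$ by computing $\norm{A_\beta^{-1}L^*}^2=\norm{A_\beta^{-1}L^*LA_\beta^{-1}}$ directly, and you use a three-zone split driven by Lemma~\ref{lemma Alibaud et al}(iii) and $m_\beta\sim\beta^{2s}$, whereas the paper uses two zones with an ad hoc minimiser $\xi_\circ$ and invokes Assumption~\ref{additional-assumption-phi-chapeau} at the end.
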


\begin{proof}
Let $\beta>0$, we have
$f_\beta-f_\beta^\delta=(L^*L+Q_\beta^*Q_\beta)^{-1}L^*(g-g^\delta)$.
Since $\ran{L^*}=\ran{(L^*L)^{1/2}}$
(see e.g. \cite[Proposition 2.18]{engl1996regularization}), there exists
$\e^\delta\in L^2\segoo{0}{\infty}$ such that
$\tilde{f}\eqd L^*(g-g^\delta)=(L^*L)^{1/2}\e^\delta$.
We can see that $\norm{\e^\delta}=\norm{g-g^\delta}$.
As a matter of fact, denoting by~$P$ the orthogonal projection
onto the closure of the range of~$L$, we have 
\begin{eqnarray*}
\norm{\e^\delta}^2
&=&
\scal{\e^\delta}{\e^\delta}\\
&=&
\scal{(L^*L)^{-1/2}\tilde{f}}{(L^*L)^{-1/2}\tilde{f}}\\
&=&
\scal{\tilde{f}}{(L^*L)^{-1}\tilde{f}}\\
&=&
\scal{L^*(g-g^\delta)}{(L^*L)^{-1}L^*(g-g^\delta)}\\
&=&
\scal{g-g^\delta}{L(L^*L)^{-1}L^*(g-g^\delta)}\\
&=&
\scal{g-g^\delta}{P(g-g^\delta)}\\
&=&
\norm{g-g^\delta}^2,
\end{eqnarray*}
in which the sixth equality is stems from the fact that
$L(L^*L)^{-1}L^*=LL^\dagger=P$.
It follows that
$$
FV(f_\beta-f_\beta^\delta)= \left[\frac{(2\pi)^{1/4}\sqrt{\widehat{k}}}{\sqrt{2\pi}\widehat{k}+\mod{1-\sqrt{2\pi}\widehat\varphi_\beta}^2}\right] F V \varepsilon^\delta.
$$
Let $\Psi_\beta$ be the function defined by
\begin{equation*}
\label{def func Psi_beta moll deconv}
\Psi_\beta(\xi)=
\frac{(2\pi)^{1/4}\sqrt{\widehat{k}(\xi)}}{\sqrt{2\pi}\widehat{k}(\xi)+ \mod{1-\sqrt{2\pi}\widehat{\varphi}(\beta \xi)}^2}.
\end{equation*}
Using the unitarity of~$V$ and $F$, we see that
\begin{eqnarray}
\label{eq3}
\norm{f_\beta - f_\beta^\delta}
&=&
\int_{\Re}\Psi_\beta^2(\xi)
\modc{FV \varepsilon^\delta(\xi)}^2 \ud\xi \nonumber\\
&=&
\int_{|\xi|\leq r}
\Psi_\beta^2(\xi)\modc{FV \varepsilon^\delta(\xi)}^2\ud\xi +
\int_{|\xi| > r}
\Psi_\beta^2(\xi)\modc{FV \varepsilon^\delta(\xi)}^2\ud\xi
\end{eqnarray}
for any positive number~$r$.
Clearly, there exists $r_\circ>0$ such that
$$
\forall\xi\in\segcc{-r_\circ}{r_\circ},\quad
\sqrt{\widehat{k}(\xi)}>
\frac{1}{2}\sqrt{\widehat{k}(0)}.
$$
Therefore, if $|\xi| \leq r_\circ$, then
$$
\Psi_\beta(\xi) \leq (2\pi)^{-1/4}/\sqrt{\widehat{k}(\xi)} \leq
2(2\pi)^{-1/4}/\sqrt{\widehat{k}(0)},
$$
which implies that
\begin{equation}
\label{inte mol conv}
\int_{|\xi|\leq r_\circ}
\Psi_\beta^2(\xi)\modc{FV \varepsilon^\delta(\xi)}^2\ud\xi\leq
\frac{4}{\sqrt{2\pi}\widehat{k}(0)}
\int_{|\xi|\leq r_\circ}
\modc{FV \varepsilon^\delta(\xi)}^2\ud\xi.
\end{equation}
Now, define
$$
h_\circ(\beta)=\min
\setc{\modc{1-\sqrt{2\pi}\widehat{\varphi}(\beta\xi)}}
{\xi\in\Re,\;\mod{\xi}\geq r_\circ},
$$
in which the existence of the minimum is guaranteed by the Riemann-Lebesgue
lemma. Let $\xi_\circ$ be a point of attainment in the above minimum.
For every $\xi$ such that $\mod{\xi}\geq r_\circ$,
\begin{eqnarray*}
\Psi_\beta(\xi)\leq\frac{(2\pi)^{1/4}\sqrt{\widehat{k}(\xi)}}
{2(2\pi)^{1/4}\sqrt{\widehat{k}(\xi)}|1-\sqrt{2\pi} \widehat{\varphi}(\beta\xi)|}
\leq
\frac{1}{2\mod{1-\sqrt{2\pi}\widehat{\varphi}(\beta\xi_\circ)}}.
\end{eqnarray*}
This implies that 
\begin{equation}
\label{inte 2 mol deconv}
\int_{|\xi|>r_\circ}
\Psi_\beta^2(\xi)\modc{FV \varepsilon^\delta(\xi)}^2\ud\xi\leq
\frac{1}{4\mod{1-\sqrt{2\pi}\widehat{\varphi}(\beta\xi_\circ)}^2}
\int_{|\xi|>r}
\modc{FV \varepsilon^\delta(\xi)}^2\ud\xi.
\end{equation}
Applying \eqref{eq3} with $r =r_\circ$ together with \eqref{inte mol conv} and \eqref{inte 2 mol deconv} yields
\begin{equation}
\label{yyyy mol deconv}
\norm{f_\beta-f_\beta^\delta}^2\leq
\max\left\lbrace\frac{4}{\sqrt{2\pi}\widehat{k}(0)},
\frac{1}{4|1-\sqrt{2\pi}
\widehat{\varphi}(\beta \xi_\circ)|^2}\right\rbrace
\normc{FV\varepsilon^\delta}^2.
\end{equation}
Since $\mod{1-\sqrt{2\pi}\widehat{\varphi}(\beta \xi_\circ)}\to 0$
as $\beta$ goes to $0$, then for $\beta \ll 1$,
$\sqrt{2\pi}\widehat{k}(0)/4>4\mod{1-\sqrt{2\pi}
\widehat{\varphi}(\beta\xi_\circ)}^2$.
Hence~\eqref{yyyy mol deconv} implies that
\begin{equation}
\label{zzz mol deconv}
\norm{f_\beta-f_\beta^\delta}^2\leq
\frac{\norm{FV \varepsilon^\delta}^2}
    {2\mod{1-\sqrt{2\pi}\widehat{\varphi}(\beta \xi_\circ)}}
=\frac{\norm{\varepsilon^\delta}^2}{ 2\mod{1-\sqrt{2\pi}\widehat{\varphi}(\beta \xi_\circ)}}
=\frac{\norm{g - g^\delta }^2}
    {2\mod{1-\sqrt{2\pi}
    \widehat{\varphi}(\beta \xi_\circ)}}
\quad\text{as}\quad
\beta\downarrow 0. 
\end{equation}
The estimate \eqref{bound error prog term} follows
immediately by applying~\eqref{cond on conv kernel phi}
and~\eqref{noise level cond on data} to~\eqref{zzz mol deconv}.
\end{proof}

\begin{remark}\rm
From Theorem \ref{Theo consistency} and Proposition
\ref{Prop estimate data propagated error},
we deduce that for $\beta(\delta) = \delta^{\theta/s}$
with $\theta\in\segoo{0}{1}$, we have
\begin{equation}
\label{conver method noisy case}
\norm{f^\dagger-f_{\beta(\delta)}^\delta}\to 0
\quad\text{as}\quad
\delta\to 0.
\end{equation}
\end{remark}

Let us now study the regularization error $f^\dagger-f_\beta$.
It is well known that without imposing smoothness condition
on the exact solution~$f$ (or on the exact data $g$),
the regularization error of any regularization method
converges arbitrarily slowly to $0$
(see, e.g. \cite{schock1985approximate}).
Henceforth, we consider the smoothness condition 
\begin{equation}
\label{smoothness cond on f}
V f^\dagger \in H^p(\Re), \quad \text{with} \quad  p>0.
\end{equation}

\begin{proposition}\sf
\label{Prop characterization smoothness cond on f}
For $p\in\Ne$, the smoothness condition \eqref{smoothness cond on f} is equivalent to 
\begin{equation}
    \label{equivalent smoothness condition}
    \forall \,\ k =1,2,...p, \quad x^k (f^\dagger)^{(k)}(x) \in L^2(0,\infty).
\end{equation}
\end{proposition}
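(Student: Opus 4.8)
The plan is to transport the statement to the real line through the unitary operator $V$ and to reduce the whole equivalence to a single algebraic identity between differential operators.

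First I would record how $V$ interacts with differentiation. Writing $D\eqd x\frac{d}{dx}$ for the Euler operator on $\segoo{0}{\infty}$ and differentiating $(Vf)(u)=e^{u/2}f(e^u)$ after the substitution $x=e^u$, a direct computation gives the intertwining relation
\[
\frac{d}{du}\,Vf = V\Big(D+\tfrac12\Big)f ,
\]
whence, upon iteration, $\frac{d^{\,j}}{du^{\,j}}Vf=V\big(D+\tfrac12\big)^{j}f$ for every $j$. Since $V$ is unitary from $L^2\segoo{0}{\infty}$ onto $L^2(\Re)$ and, for $p\in\Ne$, the space $H^p(\Re)$ is precisely the set of $g\in L^2(\Re)$ whose weak derivatives up to order $p$ lie in $L^2(\Re)$, this identity yields
\[
Vf^\dagger\in H^p(\Re)
\quad\Longleftrightarrow\quad
\Big(D+\tfrac12\Big)^{j} f^\dagger\in L^2\segoo{0}{\infty}\ \text{ for } j=0,1,\dots,p .
\]
Because $x=e^u$ is a smooth diffeomorphism of $\segoo{0}{\infty}$ onto $\Re$, this identification is valid at the level of weak derivatives, which is the meaning I attach to $(f^\dagger)^{(k)}$ throughout.

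Second, I would prove the key operator identity
\[
x^{k}\frac{d^{k}}{dx^{k}}=D(D-1)\cdots(D-k+1)\eqd (D)_k ,
\]
by induction on $k$: the case $k=1$ is the definition of $D$, and the inductive step follows from the one-line computation $(D-k)\big(x^{k}\frac{d^{k}}{dx^{k}}\big)=x^{k+1}\frac{d^{k+1}}{dx^{k+1}}$. The purpose of this identity is that both families $\{(D+\tfrac12)^{j}\}_{j=0}^{p}$ and $\{(D)_k\}_{k=0}^{p}$ consist of \emph{monic} polynomials in $D$ of degrees $j$ and $k$ respectively; hence each is a triangular, invertible linear combination of the other. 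Consequently, for every $k\le p$ one can write $(D)_k=\sum_{j=0}^{k} c_{kj}\big(D+\tfrac12\big)^{j}$ and, symmetrically, $\big(D+\tfrac12\big)^{j}=\sum_{k=0}^{j} d_{jk}(D)_k$, with constant coefficients.

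Finally I would close both implications with these finite linear combinations. If $x^{k}(f^\dagger)^{(k)}\in L^2\segoo{0}{\infty}$ for $k=1,\dots,p$ (and trivially for $k=0$, since $f^\dagger\in L^2$), then each $\big(D+\tfrac12\big)^{j} f^\dagger=\sum_{k\le j} d_{jk}(D)_k f^\dagger$ is a finite combination of $L^2$ functions, so by the first step $Vf^\dagger\in H^p(\Re)$; conversely, if $\big(D+\tfrac12\big)^{j} f^\dagger\in L^2$ for $j=0,\dots,p$, then $x^{k}(f^\dagger)^{(k)}=(D)_k f^\dagger=\sum_{j\le k} c_{kj}\big(D+\tfrac12\big)^{j} f^\dagger\in L^2$ for each $k\le p$. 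I expect the only genuine difficulty to be the bookkeeping of domains: one must justify the intertwining relation and the identity $(D)_k=x^{k}\frac{d^{k}}{dx^{k}}$ in the weak (distributional) sense rather than merely formally. This I would handle by first verifying the relations on the dense subspace $C^\infty_0\segoo{0}{\infty}$ and then invoking the unitarity of $V$ together with the stability of weak derivatives under the smooth change of variables $x=e^u$.
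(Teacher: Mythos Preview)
Your argument is correct and is in essence the same as the paper's, but your packaging through the Euler operator $D=x\,d/dx$ is cleaner. The paper proceeds by direct computation: it writes
\[
(Vf^\dagger)^{(j)}(u)=\sum_{k=0}^{j} a_{k,j}\,e^{(k+1/2)u}(f^\dagger)^{(k)}(e^u),
\]
obtains the coefficients $a_{k,j}$ via an explicit recurrence with $a_{j,j}=1$, and then uses the change of variables $v=e^u$ to identify $e^{(k+1/2)u}(f^\dagger)^{(k)}(e^u)\in L^2(\Re)$ with $x^k(f^\dagger)^{(k)}\in L^2(0,\infty)$. The equivalence then follows from the triangular, unit-diagonal nature of $(a_{k,j})$, though the paper does not spell out the inversion step.

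Your version encodes exactly the same triangular relation but without computing any coefficients: the intertwining $\frac{d}{du}V=V(D+\tfrac12)$ together with the classical identity $x^k\frac{d^k}{dx^k}=D(D-1)\cdots(D-k+1)$ makes it transparent that $\{(D+\tfrac12)^j\}_{j\le p}$ and $\{x^k\,d^k/dx^k\}_{k\le p}$ are two monic polynomial bases in $D$, hence related by an invertible triangular transformation. This is slightly more conceptual and, as you note, makes the domain issues (weak derivatives, density of $C_0^\infty(0,\infty)$) easier to isolate; the paper's proof leaves those implicit.
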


\begin{proof}
By mere computation, one gets that
\begin{equation}
    \label{eee 1}
(V f^\dagger )^{(p)}(x) = \sum_{k=0}^p a_{k,p} e^{(k+1/2)x}(f^\dagger)^{(k)}(e^x)
\end{equation}
where
$a_{0,p} = 1/2^p$, $a_{p,p} =1$ and $(a_{k,p})_{0<k<p}$ can be computed via the recurrence  equation
$$
a_{k,j+1} = (k + 1/2)a_{k,j} + a_{k-1,j}, \quad  k =1,..., j
$$
where the index $j$ runs from $1$ to $p-1$.
Next, the characterization \eqref{equivalent smoothness condition}
follows from \eqref{eee 1} and the fact that
$e^{(k+1/2)x}(f^\dagger )^{(k)}(e^x) \in L^2(\Re)$
if and only if $x^k (f^\dagger)^{(k)}(x) \in L^2(0,\infty)$ (using the change of variable $v = e^x$).
\end{proof}

Before getting into the analysis of convergence rates,
let us show that the smoothness \eqref{smoothness cond on f}
is nothing but a logarithmic source conditions generally
occurring in the regularization of exponentially ill-posed problems.

Henceforth, $\bar{L}:=L/\sqrt{\pi e}$ denotes the normalized
operator $L$, so that, $|||\bar{L}^*\bar{L}|||\leq e^{-1}$, normalization is necessary for logarithmic source condition (see, e.g. \cite{hohage2000regularization}).

\begin{proposition}\sf
\label{Prop 1 smoothness condition}
There exists two constants~$m$ and~$M$ such that
the smoothness condition~\eqref{smoothness cond on f}
is equivalent to the logarithmic source condition 
\begin{equation}
\label{log sour cond}
f^\dagger=f_p(\bar{L}^*\bar{L}) w,\quad
w\in L^2(0,\infty)
\end{equation} 
 satisfying 
\begin{equation}
\label{bound w and f}
m \norm{w} \leq \norm{Vf^\dagger }_{H^p} \leq M \norm{w}.
\end{equation}
where $f_{p}$ is defined as
\begin{equation*}
\forall \lambda \in (0,1), \quad f_{p}(\lambda) = (-\ln(\lambda))^{-p}.
\end{equation*}
\end{proposition}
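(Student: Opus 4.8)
The plan is to diagonalize the normalized operator $\bar{L}^*\bar{L}$ by the unitary operator $U\eqd FV$, thereby turning both the smoothness condition~\eqref{smoothness cond on f} and the source condition~\eqref{log sour cond} into weighted $L^2$-statements about the single function $\widehat{Vf^\dagger}$. By Theorem~\ref{L2-V*KV}, $L^*L=V^*KV$, and since $K$ is convolution by $k$ we have $K=F^*[\sqrt{2\pi}\,\widehat{k}]F$; hence $\bar{L}^*\bar{L}=\frac{1}{\pi e}U^*[\sqrt{2\pi}\,\widehat{k}]U$. Using $\sqrt{2\pi}\,\widehat{k}(\xi)=\pi/\ch(\pi\xi)$, this reads $\bar{L}^*\bar{L}=U^*[\sigma]U$ with
$$
\sigma(\xi)=\frac{1}{e\,\ch(\pi\xi)}\in(0,1/e].
$$
Since $\sigma$ takes its values in $(0,1/e]$, the composition $(-\ln\sigma)^{-p}=f_p\circ\sigma$ is a well-defined bounded Borel function, and the spectral theorem for the multiplication operator $[\sigma]$ gives $f_p(\bar{L}^*\bar{L})=U^*[(-\ln\sigma)^{-p}]U$. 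Writing $\widehat{Vf^\dagger}=Uf^\dagger$, the source condition~\eqref{log sour cond} is thus equivalent to the pointwise identity $\widehat{Vf^\dagger}(\xi)=(-\ln\sigma(\xi))^{-p}\,\widehat{Vw}(\xi)$ a.e.

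The next step is a two-sided comparison between the logarithmic weight and the Sobolev weight. From the formula for $\sigma$ one computes
$$
-\ln\sigma(\xi)=1+\ln\ch(\pi\xi),
$$
which is even, positive, continuous, attains its minimum $1$ at $\xi=0$, and satisfies $-\ln\sigma(\xi)=\pi\mod{\xi}-\ln 2+1+o(1)$ as $\mod{\xi}\to\infty$. These three facts produce constants $0<c_1\leq c_2$ with $c_1(1+\mod{\xi})\leq-\ln\sigma(\xi)\leq c_2(1+\mod{\xi})$ for all $\xi$, and hence constants $0<a\leq b$ with
$$
a\,(1+\xi^2)^{p}\leq(-\ln\sigma(\xi))^{2p}\leq b\,(1+\xi^2)^{p},\qquad \xi\in\Re.
$$

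Finally I would read off the equivalence and the norm bounds from this comparison. Given $Vf^\dagger\in H^p(\Re)$, define $w$ through $\widehat{Vw}\eqd(-\ln\sigma)^{p}\widehat{Vf^\dagger}$; then by Plancherel and the unitarity of $U$,
$$
\norm{w}^2=\int_{\Re}(-\ln\sigma(\xi))^{2p}\modc{\widehat{Vf^\dagger}(\xi)}^2\ud\xi,
$$
which is finite precisely when $\norm{Vf^\dagger}_{H^p}^2=\int_{\Re}(1+\xi^2)^{p}\modc{\widehat{Vf^\dagger}(\xi)}^2\ud\xi$ is, and by construction $f^\dagger=f_p(\bar{L}^*\bar{L})w$. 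The weight inequality then yields $a\norm{Vf^\dagger}_{H^p}^2\leq\norm{w}^2\leq b\norm{Vf^\dagger}_{H^p}^2$, i.e.\ \eqref{bound w and f} with $m=b^{-1/2}$ and $M=a^{-1/2}$. Conversely, if the source condition holds, the same identity $\widehat{Vf^\dagger}=(-\ln\sigma)^{-p}\widehat{Vw}$ together with the lower bound $(-\ln\sigma)^{2p}\geq a(1+\xi^2)^{p}$ gives $\norm{Vf^\dagger}_{H^p}^2\leq a^{-1}\norm{w}^2<\infty$, so $Vf^\dagger\in H^p(\Re)$.

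The conceptual crux is the functional-calculus identification of the first paragraph, which correctly routes the logarithmic source function through the exponentially small multiplier $\sigma$; the only genuinely technical point is the weight comparison of the second paragraph, where the logarithm converts the exponential decay of $\sigma$ into the polynomial Sobolev scale $(1+\xi^2)^{p}$. This is exactly the mechanism that encodes the severe logarithmic ill-posedness already quantified in Theorem~\ref{stability estimate}.
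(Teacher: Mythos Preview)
Your proof is correct and follows essentially the same route as the paper: both diagonalize $\bar L^*\bar L$ via $U=FV$ to the multiplier $\sigma(\xi)=1/(e\,\ch(\pi\xi))$, and both reduce the equivalence to the two-sided comparison of $(-\ln\sigma(\xi))^{p}$ with the Sobolev weight $(1+\xi^2)^{p/2}$, obtained from continuity, positivity, and the linear growth $-\ln\sigma(\xi)\sim\pi\mod{\xi}$ at infinity. The paper packages this comparison as boundedness above and below of the single function $\Phi(\mod{\xi})=\bigl[(1+\xi^2)^{1/2}/(-\ln\sigma(\xi))\bigr]^p$, which is exactly your weight inequality in ratio form.
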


\begin{proof}
Let $\xi \in \Re$, we have
$$
(1+|\xi|^2)^{p/2} f_p \left(\sqrt{2\pi} \widehat{k}(\xi)/\pi e \right) = \Phi(|\xi|) := \left[  \frac{\sqrt{1 + |\xi|^2}}{1 + \pi |\xi| + \ln\left( (1+e^{-2\pi |\xi|})/2 \right)}\right]^p
$$
Since the function $\Phi$ is strictly positive
and continuous on $[0,\infty)$ with
$\lim_{x\rightarrow\infty}\Phi(x)=1/\pi^p<\infty$,
we deduce that there exist constants~$m$ and~$M$
such that $\Phi(x)\in\segcc{m}{M}$ for all $x\geq 0$.
This implies that
\begin{equation}
\label{eq 1}
\forall\xi\in\Re,\quad
m\leq(1+\mod{\xi}^2)^{p/2} f_p
\left(\sqrt{2\pi}\widehat{k}(\xi)/\pi e \right) \leq M.
\end{equation}
Now, let the function $\varpi$ be formally defined
in the frequency domain as
\begin{equation}
\label{eq 2}
\widehat{\varpi}(\xi)=
\frac{1}{f_p \left(\sqrt{2\pi}\widehat{k}(\xi)/\pi e\right)}
\widehat{Vf}(\xi).
\end{equation}
From \eqref{eq 1}, we can readily see that the right
inequality implies that
$\varpi\in L^2(\Re)\Rightarrow Vf \in H^p(\Re)$
with $\norm{V f}_{H^p}\leq M\norm{\varpi}$
while the left inequality implies that
$Vf\in H^p(\Re)\Rightarrow\varpi\in L^2(\Re)$
with $m\norm{\varpi}\leq\norm{V f}_{H^p}$.
Finally, notice that \eqref{eq 2} can be rewritten as 
\begin{equation*}
FVf=
f_p\left(\sqrt{2\pi}\widehat{k}(\xi)/\pi e\right)F\varpi=
f_p\left(\sqrt{2\pi}\widehat{k}(\xi)/\pi e\right)FVV^*\varpi,
\end{equation*}
which implies that
\begin{equation}
\label{eq 22}
f=V^* F^*f_p\left(\sqrt{2\pi}\widehat{k}(\xi)/\pi e\right)FVw
\quad\text{with}\quad
w=V^*\varpi\in L^2(0,\infty).
\end{equation}
But \eqref{eq 22} is nothing but $f=f_p(\bar{L}^*\bar{L}) w$.
Finally notice that $\norm{w}=\norm{\varpi}$
since~$V$ is unitary.
\end{proof}

\begin{remark}\rm
From Proposition \ref{Prop 1 smoothness condition},
we can deduce that the smoothness
condition \eqref{smoothness cond on f}
is nothing but the logarithmic source
condition \eqref{log sour cond}.
Hence, letting $\rho:=\norm{w}$, we deduce
that the optimal-order convergence rate under
the noise level condition \eqref{noise level cond on data}
is  $C \rho f_{p}(\delta^2/\rho^2)$
with $C\geq 1$ independent of $\rho$ and $\delta$.
\end{remark}

\begin{proposition}\sf
\label{Prop rate reg error}
Consider the setting of Lemma \ref{lemma Alibaud et al}, and let
$g=Lf$. Assume that the unknown solution $f$ satisfies the smoothness condition \eqref{smoothness cond on f}. Then 
\begin{equation}
\label{estimate regul error}
\norm{ f - f_\beta } \leq \left(\frac{2s}{p\wedge 2s}\right)^p\rho f_p \left(\beta^{2s} \right) (1 + o(1)), \quad \text{as} \quad \beta \downarrow 0,
\end{equation}
where $\rho = \norm{w}$ and satisfies $ m \rho \leq \norm{V f}_{H^p} \leq M \rho$.
\end{proposition}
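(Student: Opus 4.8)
The plan is to diagonalize the entire computation by the unitary operator $U\eqd FV$ and thereby reduce the regularization error to the supremum of an explicit Fourier multiplier. Since $g=Lf$, we have $f_\beta=(L^*L+Q_\beta^*Q_\beta)^{-1}L^*Lf$, and exactly as in the proof of Theorem~\ref{Theo consistency} this gives
\[
U(f-f_\beta)=\left[\Theta_\beta\right]Uf,\qquad
\Theta_\beta(\xi)\eqd\frac{b_\beta(\xi)}{a(\xi)+b_\beta(\xi)},\quad
a(\xi)\eqd\sqrt{2\pi}\,\widehat{k}(\xi)=\frac{\pi}{\ch(\pi\xi)},\quad
b_\beta(\xi)\eqd\modc{1-\sqrt{2\pi}\,\widehat{\varphi}(\beta\xi)}^2.
\]
Setting $\psi(\xi)\eqd 1+\ln\ch(\pi\xi)\ge 1$, one has the convenient identity $a(\xi)=\pi e\,e^{-\psi(\xi)}$, whence $f_p\big(a(\xi)/(\pi e)\big)=\psi(\xi)^{-p}$. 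Pushing the logarithmic source representation $f=f_p(\bar{L}^*\bar{L})w$ of Proposition~\ref{Prop 1 smoothness condition} through $U$ gives $Uf=[\psi^{-p}]Uw$, so that, $U$ being unitary and $\norm{w}=\rho$,
\[
\norm{f-f_\beta}^2=\int_\Re\Theta_\beta(\xi)^2\psi(\xi)^{-2p}\modc{Uw(\xi)}^2\ud\xi
\le\Big(\sup_{\xi\in\Re}\Theta_\beta(\xi)\psi(\xi)^{-p}\Big)^2\rho^2 .
\]
Everything then reduces to proving $\sup_{\xi}\Theta_\beta(\xi)\psi(\xi)^{-p}\le\big(\tfrac{2s}{p\wedge 2s}\big)^pf_p(\beta^{2s})(1+o(1))$.

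To estimate this supremum I would split the frequency axis at a level $\psi(\xi)=\tau$, with $\tau\sim\abs{\ln\beta}$ to be chosen. On the high region $\psi(\xi)\ge\tau$ I use only $\Theta_\beta\le 1$, so that $\Theta_\beta\psi^{-p}\le\tau^{-p}$. On the low region $\psi(\xi)<\tau$ I use $\Theta_\beta\le b_\beta/a$, together with the upper bound $b_\beta(\xi)\le C_0M_\beta\abs{\xi}^{2s}$ of Lemma~\ref{lemma Alibaud et al}(iii) and $M_\beta\le C\beta^{2s}$ from Lemma~\ref{lemma completion lemma alibaud}(i). Since $\ch(\pi\xi)=e^{\psi-1}$ forces $\pi\abs{\xi}\le\psi$, the identity $a=\pi e\,e^{-\psi}$ gives on this region
\[
\Theta_\beta(\xi)\psi(\xi)^{-p}\le\frac{b_\beta(\xi)}{a(\xi)}\psi(\xi)^{-p}
\lesssim \beta^{2s}\abs{\xi}^{2s}\frac{e^{\psi}}{\pi e}\psi^{-p}
\lesssim \psi^{2s-p}\,e^{\psi-2s\abs{\ln\beta}},
\]
a function of $\psi$ which is increasing for the relevant large values of $\psi$ and hence attains its maximum over $[1,\tau]$ at $\psi=\tau$.

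It remains to choose $\tau$ to match the claimed constant. The choice $\tau=(p\wedge 2s)\abs{\ln\beta}$ makes the high-region bound equal to $\big((p\wedge 2s)\abs{\ln\beta}\big)^{-p}=\big(\tfrac{2s}{p\wedge 2s}\big)^pf_p(\beta^{2s})$, which is exactly the target. When $p<2s$ the low-region bound at $\psi=\tau$ is $\lesssim\abs{\ln\beta}^{2s-p}\beta^{2s-p}$, which decays exponentially and is therefore $o(\tau^{-p})$; the high region dominates and the estimate follows with constant $(2s/p)^p$. When $p\ge 2s$ one has $p\wedge 2s=2s$ and target constant $1$; there I would take instead $\tau=2s\abs{\ln\beta}-(2s+1)\ln\abs{\ln\beta}$, so that $\tau^{-p}=(2s\abs{\ln\beta})^{-p}(1+o(1))=f_p(\beta^{2s})(1+o(1))$ while the logarithmic shift pushes the low-region bound down to $o(\tau^{-p})$. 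Adding the two regions and letting $\beta\downarrow 0$ yields~\eqref{estimate regul error}.

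The diagonalization and the Plancherel bookkeeping are routine. The genuinely delicate step, and the main obstacle, is the last one: balancing the two regimes so as to extract the precise constant $\big(2s/(p\wedge 2s)\big)^p$ together with the factor $1+o(1)$. The subtlety is the dichotomy at $p=2s$, where the naive threshold $\tau=2s\abs{\ln\beta}$ fails because the low-frequency contribution is no longer exponentially small (it is $\gtrsim\abs{\ln\beta}^{2s-p}$) and must be controlled by the logarithmic correction to $\tau$.
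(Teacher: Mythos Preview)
Your argument is correct and reaches the same conclusion with the same constant, but it follows a genuinely different path from the paper's proof. The paper does not estimate the multiplier $\Theta_\beta(\xi)\psi(\xi)^{-p}$ directly. Instead it observes that $f-f_\beta=f_p(\bar L^*\bar L)\vartheta_\beta$ with $\norm{\vartheta_\beta}\le\rho$, invokes an abstract result of Hohage (Proposition~1 in \cite{hohage2000regularization}) to obtain $\norm{f-f_\beta}\le\rho\,f_p\big(\norm{\bar L(f-f_\beta)}^2/\rho^2\big)(1+o(1))$, and then bounds $\norm{\bar L(f-f_\beta)}$ by the interpolation inequality $\norm{(\bar L^*\bar L)^{1/2}v}\le\norm{(\bar L^*\bar L)v}^{1/2}\norm{v}^{1/2}$ together with~\eqref{upp bound (I-C_beta)^2 f}; this yields $\norm{\bar L(f-f_\beta)}^2/\rho^2\le\beta^{p\wedge 2s}$, and the exact constant $\big(2s/(p\wedge 2s)\big)^p$ then drops out of the identity $f_p(\beta^{p\wedge 2s})=\big(2s/(p\wedge 2s)\big)^pf_p(\beta^{2s})$ (Lemma~\ref{Lemma prop log sour func}).

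What the two approaches buy: the paper's route is shorter and more structural, delegating the delicate balancing to Hohage's general proposition, so it would transfer verbatim to other index functions. Your route is entirely self-contained and makes the mechanism visible: the high-frequency cap $\Theta_\beta\le 1$ fixes the leading constant, while the low-frequency estimate via $b_\beta\le C_0 M_\beta|\xi|^{2s}$ and $|\xi|\le\psi/\pi$ is shown to be of lower order once $\tau$ is chosen. The dichotomy you identify at $p=2s$, with the logarithmic shift $\tau=2s|\ln\beta|-(2s+1)\ln|\ln\beta|$, is exactly the hands-on counterpart of what Hohage's lemma handles abstractly. Two small clean-ups: when $p>2s$ the function $\psi\mapsto\psi^{2s-p}e^{\psi}$ is not monotone on $[1,\tau]$, so you should also note that its value at $\psi=1$ is $O(\beta^{2s})=o(\tau^{-p})$; and ``adding the two regions'' should read ``taking the maximum over the two regions'', though this does not affect the bound.
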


\begin{proof} We first remark that 
since $g=Lf$, and $L$ is injective, we have $f^\dagger =f$. Then
\begin{eqnarray}
\label{eq ii}
FV(f - f_\beta)(\xi)
&=&
\frac{\mod{1-\sqrt{2\pi}\widehat{\varphi}(\beta\xi)}^2}
    {\sqrt{2\pi}\widehat{k}(\xi)+\mod{1-\sqrt{2\pi}\widehat{\varphi}(\beta\xi)}^2} FVf(\xi)\nonumber\\
&=&
f_p \left(\sqrt{2\pi}\widehat{k}(\xi)/\pi e\right)
\frac{\mod{1-\sqrt{2\pi}\widehat{\varphi}(\beta\xi)}^2}
    {\sqrt{2\pi}\widehat{k}(\xi)+\mod{1-\sqrt{2\pi}\widehat{\varphi}(\beta\xi)}^2}
    FVw(\xi)\nonumber\\
&=&
f_p \left(\sqrt{2\pi}\widehat{k}(\xi)/\pi e\right)
FV\vartheta_\beta(\xi),
\end{eqnarray}
where 
\begin{equation}
\label{eq yy}
\vartheta_\beta=
V^* F^*\left(\frac{\mod{1-\sqrt{2\pi}\widehat{\varphi}(\beta \xi)}^2}{\sqrt{2\pi}
\widehat{k}(\xi)+\mod{1-\sqrt{2\pi}\widehat{\varphi}(\beta\xi)}^2}FV w\right).
\end{equation}
From \eqref{eq ii}, we deduce that
\begin{equation}
\label{eq hh}
FV(f-f_\beta)=
f_p\left(\sqrt{2\pi}\widehat{k}(\xi)/(\pi e)\right)FV\vartheta_\beta
\end{equation}
From~\eqref{eq hh}, \eqref{eq yy}, and the unitarity of~$F$ and~$V$, we get
\begin{equation}
\label{key estimate}
f-f_\beta=
f_p\left(\bar{L}^*\bar{L}\right)\vartheta_\beta, \quad \text{with} \quad \norm{\vartheta_\beta} \leq \norm{FV w} \leq \rho.
\end{equation}
Considering the  application of \cite[Proposition 1]{hohage2000regularization}
to the  particular case of logarithmic source function, and
observing that Theorem \ref{Theo consistency}, leads to
$\norm{\bar{L}(f-f_\beta)}\to 0$ as $\beta\downarrow 0$, we deduce that
\begin{equation}
\label{main1}
\norm{f-f_\beta}\leq\rho
f_p\left( \frac{\norm{ \bar{L}(f - f_\beta)}^2}{\rho^2} \right)( 1+o(1))
\quad\text{as}\quad
\beta\downarrow 0.
\end{equation} 
Using interpolation inequality, we have
\begin{eqnarray}
\label{eq9bis}
\norm{\bar{L}(f - f_\beta)}=\norm{(\bar{L}^*\bar{L})^{1/2}(f - f_\beta)}  
\leq\norm{(\bar{L}^*\bar{L})(f - f_\beta)}^{1/2}\norm{f - f_\beta}^{1/2}.
\end{eqnarray}
On the one hand, we have
\begin{eqnarray*}
\label{eq10}
\norm{(\bar{L}^*\bar{L})(f - f_\beta)}
&=&
\normc{\frac{\sqrt{2\pi}\widehat{k}(\xi)|1-\sqrt{2\pi}\hat{\varphi}(\beta\xi)|^2}{\sqrt{2\pi}\widehat{k}(\xi)+|1-\sqrt{2\pi}\hat{\varphi}(\beta\xi)|^2}\widehat{Vf}(\xi)}\\
&\leq &
\normc{|1-\sqrt{2\pi}\hat{\varphi}(\beta\xi)|^2\widehat{Vf}(\xi)}\\
&=&
\norm{(I-C_\beta)^*(I-C_\beta) Vf}\\
&\leq & \sqrt{C^{(2)}}\beta^{2(s\wedge(p/2))}\norm{Vf}_{H^p}\\
&\leq & \sqrt{C^{(2)}} M \rho \beta^{2(s\wedge(p/2))}
\end{eqnarray*}
in which we have used~\eqref{upp bound (I-C_beta)^2 f}
and~\eqref{smoothness cond on f}.
On the other hand, since $\norm{f-f_\beta}\to 0$ 
as $\beta \downarrow 0$, we have that for $\beta\ll 1$
\begin{equation}
\label{eq11}
\norm{f-f_\beta}\leq\frac{\rho}{\sqrt{C^{(2)}} M}.
\end{equation} 
Putting together \eqref{main1}, \eqref{eq9bis}, 
\eqref{eq11} and \eqref{Property log sourc function} yields \eqref{estimate regul error}.
\end{proof}

Now we are ready to state the main result of this section about
the order-optimality of our regularization method under the smoothness
condition~\eqref{smoothness cond on f}.

\begin{theorem}\sf
\label{Theorem order optim 1 under noisy data only}
Consider the setting of Lemma \ref{lemma Alibaud et al}.
Let~$f$ and~$g$ be in  $L^2(0,\infty)$ satisfying  $g=Lf$.
Let $g^\delta\in L^2(0,\infty)$ be a noisy data
satisfying \eqref{noise level cond on data}.
Assume that the solution~$f$ satisfies~\eqref{smoothness cond on f},
that is  $Vf\in H^p(\Re)$ for some $p>0$,
and let~$f_\beta^\delta$ be the reconstructed solution
defined by~\eqref{def reg sol noisy data}. Then for
the {\it a priori} selection rule
$\beta(\delta)=\left(\Theta_p^{-1}(\delta/\rho)\right)^{1/2s}$
with $\Theta_p(t)=\sqrt{t}f_p(t)$, we have
\begin{equation}
\label{order optim rata noisy data only}
\norm{f-f_{\beta(\delta)}^\delta }\leq K^{(1)} \rho f_p\left( \delta^2/\rho ^2 \right)(1 + o(1)) \quad \text{as} \quad \delta\to 0, 
\end{equation}
where $\rho = \norm{w}$ satisfies $C^{(1)} \rho \leq  \norm{Vf}_{H^p} \leq C^{(2)} \rho $ and $K^{(1)}$ is a constant independent of $\rho$ and $\delta$.
\end{theorem}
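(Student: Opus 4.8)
The plan is to split the total error through the triangle inequality
\[
\norm{f-f_{\beta}^\delta}\leq\norm{f-f_\beta}+\norm{f_\beta-f_\beta^\delta},
\]
so that the two halves are governed by results already in hand. For the first (regularization, or bias) term I would apply Proposition~\ref{Prop rate reg error} to get $\norm{f-f_\beta}\leq\left(\frac{2s}{p\wedge 2s}\right)^p\rho\,f_p(\beta^{2s})(1+o(1))$ as $\beta\downarrow 0$; for the second (propagated-noise, or variance) term I would apply Proposition~\ref{Prop estimate data propagated error} to get $\norm{f_\beta-f_\beta^\delta}\leq C\,\delta/\beta^{s}$. Hence, up to the factor $1+o(1)$, the total error is bounded by $\rho\,f_p(\beta^{2s})+C\,\delta/\beta^{s}$, the classical sum of a bias and a variance contribution.

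Next I would insert the a priori rule $\beta^{2s}=\Theta_p^{-1}(\delta/\rho)$ with $\Theta_p(t)=\sqrt{t}\,f_p(t)$, which is precisely the choice that balances the two terms. Writing $\lambda\eqd\beta^{2s}$, the defining relation $\Theta_p(\lambda)=\delta/\rho$ reads $\sqrt{\lambda}\,f_p(\lambda)=\delta/\rho$, i.e.\ $\delta=\rho\sqrt{\lambda}\,f_p(\lambda)$. Substituting this into the variance term yields $C\,\delta/\beta^{s}=C\,\delta/\sqrt{\lambda}=C\rho\,f_p(\lambda)$, so both contributions are of the common order $\rho\,f_p(\lambda)$ and the bound collapses to $\norm{f-f_{\beta(\delta)}^\delta}\leq K^{(1)}\rho\,f_p(\lambda)(1+o(1))$ with the natural candidate $K^{(1)}=\left(\frac{2s}{p\wedge 2s}\right)^p+C$.

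The remaining and genuinely nontrivial step is to convert $f_p(\lambda)$ into the advertised $f_p(\delta^2/\rho^2)$, i.e.\ to show $f_p(\lambda)=f_p(\delta^2/\rho^2)(1+o(1))$ as $\delta\to 0$. Setting $\tau\eqd\delta/\rho$ and recalling $f_p(\lambda)=(-\ln\lambda)^{-p}$, I would take logarithms in $\sqrt{\lambda}\,f_p(\lambda)=\tau$ to obtain, with $u\eqd-\ln\lambda$ and $v\eqd-\ln\tau$, the transcendental relation $u=2v-2p\ln u$. Since the $2v$ term forces $u\to\infty$ and therefore $\ln u=\ln(2v)(1+o(1))$, the correction $2p\ln u$ is only of order $\ln v$ and is negligible against $2v$; thus $u=2v(1+o(1))$, whence $f_p(\lambda)=u^{-p}=(2v)^{-p}(1+o(1))=f_p(\tau^2)(1+o(1))$, using $-\ln(\tau^2)=2v$. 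Substituting $\tau^2=\delta^2/\rho^2$ then gives exactly the claimed rate.

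The main obstacle is this last asymptotic inversion: one must make rigorous that solving the nonlinear equation $\Theta_p(\lambda)=\delta/\rho$ reproduces the optimal logarithmic source function $f_p(\delta^2/\rho^2)$ up to a $1+o(1)$ factor, controlling the logarithmic correction $\ln u$ as $v\to\infty$. The constant $K^{(1)}$ is then visibly independent of $\rho$ and $\delta$, and the two-sided comparison $m\rho\leq\norm{Vf}_{H^p}\leq M\rho$ is inherited directly from Proposition~\ref{Prop 1 smoothness condition}.
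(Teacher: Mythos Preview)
Your proposal is correct and follows essentially the same route as the paper: split via the triangle inequality, invoke Proposition~\ref{Prop rate reg error} and Proposition~\ref{Prop estimate data propagated error}, insert the rule $\beta^{2s}=\Theta_p^{-1}(\delta/\rho)$ so that both terms collapse to the common order $\rho f_p\big(\Theta_p^{-1}(\delta/\rho)\big)$, and then identify this with $\rho f_p(\delta^2/\rho^2)$ up to $1+o(1)$.

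The one genuine difference lies in the last identification. The paper does not carry out the asymptotic inversion of $\Theta_p(\lambda)=\delta/\rho$ by hand; instead it observes that $\rho f_p\big(\Theta_p^{-1}(\delta/\rho)\big)$ is precisely the known optimal rate under the logarithmic source condition~\eqref{log sour cond} and cites \cite[Theorem~1]{mathe2003geometry} and \cite[Theorem~2.1]{tautenhahn1998optimality} for the equivalence with $\rho f_p(\delta^2/\rho^2)$. Your direct computation---setting $u=-\ln\lambda$, $v=-\ln(\delta/\rho)$, deriving $u=2v-2p\ln u$, and concluding $u=2v(1+o(1))$---is a valid and more self-contained alternative that avoids the external references, at the cost of a short elementary argument.
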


\begin{proof}
from Propositions \ref{Prop estimate data propagated error} and \ref{Prop 1 smoothness condition}, we get that there exist  positive constants $\bar{C}$, and $C$ such that
\begin{equation*}
\norm{f - f_{\beta(\delta)}^\delta } \leq \bar{C} \left(\frac{2s}{p\wedge 2s}\right)^p\rho f_p \left(\beta^{2s} \right)  + C \frac{\delta}{\beta^s},
\end{equation*}
For $\beta(\delta) = \left( \Theta_p^{-1}(\delta/\rho) \right)^{1/2s} $, we deduce that
\begin{equation}
    \label{eq eq}
\norm{f - f_{\beta(\delta)}^\delta } \leq \left( \bar{C }\left(\frac{2s}{p\wedge 2s}\right)^p + C \right) \rho f_p \left(\Theta_p^{-1}(\delta/\rho) \right) 
\end{equation}
The estimate (\ref{order optim rata noisy data only}) follows readily from the fact that $\rho f_p( \Theta_p^{-1}(\delta/\rho) )$ is nothing but the optimal rate under the assumption \eqref{log sour cond} (see, e.g. \cite[Theorem 1]{mathe2003geometry} and \cite[Theorem 2.1]{tautenhahn1998optimality}).
\end{proof}

\begin{corollary}\sf
\label{Corallary key}
Consider the setting of Theorem \ref{Theorem order optim 1 under noisy data only}.
For the {\it a priori} selection rule
$\beta(\delta) = c \delta^{\theta/s}$ with $\theta \in (0,1)$, we have
\begin{equation}
\label{rate sel rule}
\norm{f - f_{\beta(\delta)}^\delta } \leq K \rho f_p\left( \delta^{2 \theta} \right)(1 + o(1)) + C \delta^{1-\theta} = \mathcal{O}\left( f_p(\delta^2) \right) \quad \text{as} \quad \delta \to 0, 
\end{equation}
\end{corollary}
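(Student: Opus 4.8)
The plan is to combine the two error contributions already isolated in the preceding results and to show that the suboptimal (but more explicit) parameter choice $\beta(\delta)=c\delta^{\theta/s}$ still yields a rate of the announced logarithmic order. First I would invoke Proposition \ref{Prop estimate data propagated error} and Proposition \ref{Prop rate reg error} (whose hypotheses are in force since we are in the setting of Theorem \ref{Theorem order optim 1 under noisy data only}) to split the total error by the triangle inequality,
\begin{equation*}
\norm{f - f_{\beta(\delta)}^\delta}
\leq \norm{f - f_{\beta(\delta)}}
   + \norm{f_{\beta(\delta)} - f_{\beta(\delta)}^\delta}
\leq \bar{C}\left(\frac{2s}{p\wedge 2s}\right)^{p}\rho\, f_p\!\left(\beta(\delta)^{2s}\right)(1+o(1))
   + C\,\frac{\delta}{\beta(\delta)^{s}}.
\end{equation*}
This isolates a regularization term governed by $f_p(\beta^{2s})$ and a propagated-noise term governed by $\delta/\beta^{s}$.

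Next I would substitute $\beta(\delta)=c\delta^{\theta/s}$ into each term. For the noise term this is immediate: $\delta/\beta(\delta)^{s}=\delta/(c^{s}\delta^{\theta})=c^{-s}\delta^{1-\theta}$, which since $\theta\in(0,1)$ tends to $0$ polynomially. For the regularization term, $\beta(\delta)^{2s}=c^{2s}\delta^{2\theta}$, so using the definition $f_p(\lambda)=(-\ln\lambda)^{-p}$ one has
\begin{equation*}
f_p\!\left(\beta(\delta)^{2s}\right)
= \left(-\ln\!\big(c^{2s}\delta^{2\theta}\big)\right)^{-p}
= \left(2\theta\,\abs{\ln\delta} - 2s\ln c\right)^{-p},
\end{equation*}
which I would compare against $f_p(\delta^2)=(2\abs{\ln\delta})^{-p}$ to see that the ratio tends to $\theta^{-p}$ as $\delta\to0$, absorbing the constant into the leading factor $K$. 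This yields the first equality in \eqref{rate sel rule}.

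To obtain the final identification $\mathcal{O}(f_p(\delta^2))$ I would observe that the logarithmic factor $f_p(\delta^{2\theta})\sim\theta^{-p}f_p(\delta^2)$ decays only polylogarithmically, so it dominates the polynomially decaying noise term $C\delta^{1-\theta}$; more precisely $\delta^{1-\theta}/f_p(\delta^2)=\delta^{1-\theta}(2\abs{\ln\delta})^{p}\to0$, so the noise term is $o(f_p(\delta^2))$ and is swallowed into the $\mathcal{O}(f_p(\delta^2))$. The main (and only real) obstacle is this last comparison: one must verify that a polynomial rate $\delta^{1-\theta}$ is negligible against a logarithmic rate $f_p(\delta^2)$, which is exactly the heuristic that makes the suboptimal rule acceptable for severely ill-posed problems — any fixed $\theta\in(0,1)$ keeps the propagated error polynomially small while the dominant logarithmic term is unaffected in order. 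Everything else reduces to the elementary substitutions above and the already-established estimates, so no delicate analysis beyond the asymptotics of $(-\ln)^{-p}$ against powers of $\delta$ is required.
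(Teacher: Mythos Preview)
Your proposal is correct and follows exactly the approach implicit in the paper: the corollary is stated without proof there, but it is the immediate consequence of the splitting already established in the proof of Theorem~\ref{Theorem order optim 1 under noisy data only} (i.e.\ Propositions~\ref{Prop estimate data propagated error} and~\ref{Prop rate reg error}) upon substituting $\beta(\delta)=c\delta^{\theta/s}$ and using the elementary asymptotics of $f_p$. One small expository slip: to obtain the \emph{first} bound in~\eqref{rate sel rule} you should compare $f_p(c^{2s}\delta^{2\theta})$ against $f_p(\delta^{2\theta})$ (ratio $\to 1$), not against $f_p(\delta^2)$; the comparison with $f_p(\delta^2)$ (ratio $\to\theta^{-p}$) is what gives the final $\mathcal{O}(f_p(\delta^2))$.
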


\begin{remark}\rm
We can see that the convergence rate
in \eqref{order optim rata noisy data only} is actually
order-optimal under the logarithmic source condition~\eqref{log sour cond}.
Hence Theorem~\ref{Theorem order optim 1 under noisy data only} implies
that the regularization method is order-optimal under the smoothness
condition~\eqref{smoothness cond on f}. Moreover from Corollary~\ref{Corallary key},
we have an {\it a priori} selection rule independent of the smoothness
of~$f$ leading to order-optimal rates (with respect to $\delta$) under
the smoothness condition~\eqref{smoothness cond on f}.
\end{remark}

\section{Numerical experiments}{\label{Section numerique}}

Given a function $f \in L_c^2(0,\infty)$, we set $g$ the restriction of the Laplace transform of $f$ on $[c,\infty)$, i.e.
\begin{equation}
 \forall x \geq c, \quad    g(x) = \int_0^{\infty} e^{-x t} f(t) \mathrm{d}t.
\end{equation}
We aim to approximate $f$ from $g$. 
In this section, we use the following approach based on partial differential equation for approximating the inverse Laplace transform $f$ of the function $g$. 

\subsection{Numerical approach}
Let $u$ be the real part of $L  f(z)$ for $z = x + i y \in \mathbb{C}$, that is,
\begin{equation}
\label{def function u}
    u(x,y) := \int_0^{\infty} e^{-x t} \cos(y t) f(t) \mathrm{d}t.
\end{equation}
Given that $L  f$ is holomorphic on $\Pi_c$, then $u$ is a harmonic function on $\Omega_c := \{(x,y) \in \mathbb{R}^2, \,\, x\geq c \}$. Moreover, $u(x,0) = g$ and using theorem of derivation under integral sign, one can check that $u_y(x,0) = 0$. Therefore, $u$ is nothing but the solution of the Cauchy problem
\begin{equation}
\label{cauchy problem u}
\begin{cases}
\Delta u(u,y) = 0, & x \geq c, \quad y \in \mathbb{R}, \\
\partial_y u(x,0)=0 , & x \geq c, \\
u(x,0)=g(x), & x \geq c.
\end{cases}  
\end{equation}
On the other hand, from \eqref{def function u}, we can see that $u(x,\cdot)$ is nothing but the Fourier cosine transform of the function $f(t)e^{-xt}$. Hence, if $u$ is known,  we can recover $f$ from $u$ via the equation
\begin{equation}
    \label{def f function of u}
\forall t \geq 0 \quad f(t) = \frac{2 e^{xt}}{\pi}\int_{0}^{\infty} \cos(yt) u(x,y) \mathrm{d}y = \frac{2 \sqrt{2}e^{xt}}{\sqrt{\pi}} \mathcal{R}eal \left(\mathcal{F} (u(x,\cdot) \right).
\end{equation}
The strategy used in this section for approximating $f$ consists in approximating the solution $u$ of the Cauchy problem \eqref{cauchy problem u} and then recover $f$ from \eqref{def f function of u} by applying Fourier transform to $u(x,\cdot)$ for a chosen $x \geq c$.

Notice that the Cauchy problem \eqref{cauchy problem u} is actually ill-posed. For regularizing equation \eqref{cauchy problem u}, we consider a variational formulation of mollification where roughly speaking, $u$ is approximated by $u_\beta$ where
\begin{equation}
    \label{def u beta num sec}
  u_\beta = \mathrm{argmin}_{u \in L^2(\Omega_c)}  \quad || A u  - g||_{L^2(c,\infty)}^2 + || (I  - C_\beta)u|_{\Omega_c}||_{L^2(\mathbb R^2)}^2
\end{equation}
where $C_\beta$ is the mollifier operator and $A$ is the operator the solution $u$ of \eqref{cauchy problem u} to the data $g$ (see for instance \cite{marechal2023mollifier}).

\subsection{Discretization of the Cauchy problem}
For the discretization of the Cauchy problem~\eqref{cauchy problem u}, we used fourth-order compact finite difference scheme method \cite{lele1992compact}. Consider the rectangular domain $[c,a] \times [0, b]$. We define a uniform grid $\Gamma$ on the bounded domain~$[c,L_x] \times [0, L_y]$:
$$
\Gamma_i^j = (x_i,y_j) \quad \text{with} \quad 
\begin{cases}
x_i = c + i h_x,   & i = 0,...,n_x \\
y_j = j h_y,   & j=0,...,n_y,
\end{cases}
$$
where $h_x$ and $h_y$ are the discretization steps given by
$h_x = (L_x-c)/n_x, \quad h_y = L_y/n_y$. \\
We recall here a four order compact finite difference scheme for approximating second derivative of a non-periodic function \cite{mehra2017suite}:
\begin{equation}
    \label{CDF 1 }
\frac{1}{10} y_{i+1}'' + y_{i}''+ \frac{1}{10} y_{i-1}''  =  \frac{6}{5} \frac{y_{i+1} - 2 y_{i} + y_{i-1}}{h^2},  \quad i =1,...,N-1  
\end{equation}
with boundary formulation
\begin{equation}
\label{CDF 2}
\begin{cases}
y_0'' + 10 y_1'' &= \frac{1}{h^2} \left( \frac{145}{12} y_0 - \frac{76}{3}y_1 + \frac{29}{2} y_2 - \frac{4}{3} y_3 + \frac{1}{12} y_4 \right) \\
y_N'' + 10 y_{N-1}'' &= \frac{1}{h^2} \left( \frac{145}{12} y_N - \frac{76}{3}y_{N-1} + \frac{29}{2} y_{N-2} - \frac{4}{3} y_{N-3} + \frac{1}{12} y_{N-4} \right).
\end{cases}
\end{equation}
We reformulate \eqref{CDF 1 } and \eqref{CDF 2} in the matrix formulation
\begin{equation}
\label{Matrix form CDF}
H_1 Y'' = \frac{1}{h^2} H_2 Y, \quad  Y = (y_0,y_1,...,y_{N-1},y_N)^\top.
\end{equation}
where $H_1 = I + R$ where $I$ denotes the identity matrix and $R$ is the matrix having zeros everywhere except at position $(1,2)$ and $(N+1,N)$ where both entries are equal to $10$. The matrix $H_2$ is the nearly tri-diagonal matrix with $-12/5$ on the diagonal, $6/5$ on the lower and upper diagonal, with the first row equal $[\frac{145}{12},- \frac{76}{3}, \frac{29}{2},- \frac{4}{3}, \frac{1}{12}, 0, \cdots, 0]$ and the last row being $[0, \cdots,0,\frac{1}{12},- \frac{4}{3} ,\frac{29}{2}, - \frac{76}{3}, \frac{145}{12}]$.

Now, denotes by $u_i^j$ an approximation of $u(x_i,y_j)$, and $U^j = (u_0^j, u_1^j,u_2^j, \cdots, u_{n_x}^j)^\top$. By applying the compact finite difference scheme \eqref{CDF 1 } and \eqref{Matrix form CDF} to approximate $u_{yy}$ and $u_{xx}$ respectively, we get that for $j=1,...,n_y-1$
\begin{eqnarray}
\label{Scheme CDF applied to Cauchy PB}
\frac{6}{5}\left( U^{j+1} - 2 U^{j} + U^{j-1} \right)  &=&
 h_y^2 \,\partial_{yy} \left( \frac{1}{10} U^{j+1} +  U^{j}+ \frac{1}{10}  U^{j-1} \right)  \nonumber \\
 &=&  - h_y^2  \, \partial_{xx} \left( \frac{1}{10}  U^{j+1} +  U^{j}+ \frac{1}{10}  U^{j-1} \right) \nonumber \\
 &=& - \frac{h_y^2}{h_x^2} H_1^{-1} H_2 \left[ \frac{1}{10} U^{j+1} +  U^{j}+ \frac{1}{10} U^{j-1} \right] 
\end{eqnarray}
By multiplying both sides of \eqref{Scheme CDF applied to Cauchy PB} by $H_1$ and denoting $r = \left(h_y/h_x\right)^2$, we get following iterative scheme 
\begin{equation}
    \label{iter scheme appr U}
   \left[ H_1 + \frac{r}{12} H_2 \right]  U^{j+1} = \left[ 2 H_1 - \frac{5}{6} r H_2 \right]  U^{j} - \left[ H_1 + \frac{r}{12} H_2 \right] U^{j-1}, \quad j=1,...,n_y-1.
\end{equation}
Given that $u(x,0) = g(x)$, then $U^0 = G := g(x_0,x_1,\cdots,x_{n_x})^\top$. On the other hand, by taking $j=0$ in \eqref{iter scheme appr U} and using the fact that $U^{-1} = U^{1}$ since the function $u$ by its definition is even along $y$-direction, we deduce that 
\begin{equation}
    \label{iter for j =0}
    \left[ H_1 + \frac{r}{12} H_2 \right]  U^{1} = \left[ H_1 - \frac{5}{12} r H_2 \right]  U^{0}
\end{equation}
From \eqref{iter scheme appr U} and \eqref{iter for j =0}, we can deduce that the cauchy problem \eqref{cauchy problem u} can be discretized into the matrix formulation 
\begin{equation}
\label{matrix formulation}
    A U = B,
\end{equation}
where
$$
U= \begin{pmatrix}
U^0 \\
U^1\\
U^2 \\
\vdots\\
\vdots \\
U^{n_y} 
\end{pmatrix}, \,\,
B= \begin{pmatrix}
G \\
0\\
0 \\
\vdots \\
\vdots \\
0 
\end{pmatrix}, 
\,\,
A = 
\begin{pmatrix}
I & 0 &  \cdots& \cdots &\cdots & 0\\
\frac{1}{2} D_1 & D & 0 &  & &\vdots\\
D  & D_1 & D  & 0 && \vdots\\
0 & D  & D_1 & D & \ddots  & \vdots\\
\vdots  & \ddots &\ddots  & \ddots & \ddots  &0\\
0  & \cdots & 0  & D  & D_1 & D
\end{pmatrix}, \quad
\text{with} \quad
\begin{cases}
D_1 = \frac{5}{6} r H_2 -  2 H1 \\
D = H_1 + \frac{r}{12} H_2.
\end{cases}
$$
From \eqref{def u beta num sec}, we approximate $U$ by $U_\beta$ defined as 
\begin{equation}
    U_\beta = \argmin_{U \in \mathbb{R}^{N}} || A U - B||_2^2 + || (I - C_\beta)U||_2^2,
\end{equation}
where $N = (n_x+1)(n_y+1)$. From the first order optimality condition, we can compute $U_\beta$ as the solution of the linear equation
\begin{equation}
    \label{equation U beta}
    \left[ A^*A +  (I - C_\beta)^*(I - C_\beta)\right] U_\beta = A^*B.
\end{equation}
For a noisy data $g^\delta$, it suffices to replace $B$ by $B^\delta$, where $B^\delta$ is defined similarly to $B$ except that the first block row is $G^\delta = g^\delta(x_0,x_1,\cdots,x_{n_x})^\top$.

\subsection{Simulation setting}
For the mollifier operator $C_\beta$, we consider a Cauchy kernel, that is, the kernel $\varphi$ is defined as
\begin{equation*}
    \varphi(x,y) = \frac{1}{\pi^2(1 + x^2)(1 + y^2)} .
\end{equation*}

For the selection of the regularization parameter $\beta$, we use the Morozov principle: Given $\tau >1$ and a noise level $\delta$, we consider the parameter $\beta(\delta,g^\delta)$ defined by
\begin{equation}
\label{rule choice beta}
 \beta(\delta,g^\delta) = \sup \setc{\beta>0}{|| A U_{\beta(\delta,g^\delta)} - B^\delta || < \tau \delta}.
\end{equation}

For the computing of $\beta(\delta,g^\delta)$ solution of \eqref{rule choice beta}, we use the following algorithm with $\tau = 1.01$ and $q=0.98$.

\begin{algorithm}
\begin{center}
\begin{algorithmic}[1]
\State Set $\beta_0 \gg 1$ and $q \in (0,1)$
\State Set $\beta = \beta_0$ (initial guess)
\While{$ || A U_{\beta} - B^\delta ||> \tau \delta$}
\State $\beta= q \times \beta$
\EndWhile
\end{algorithmic}
\end{center}
\caption{}
\label{Algo beta r}
\end{algorithm}

For the simulation, we consider three examples:

\textbf{Example 1}: $c =  0, \quad f(t) = \frac{1}{b} \sin(b t) e^{-a t},   \quad g(s) = \frac{1}{(a+s)^2 + b^2}$, with $a=1/2$ and $b = \sqrt{3}/2$.

\textbf{Example 2}: $c =  0, \quad f(t) = e^{-|t-2|},   \quad g(s) = \begin{cases} 
\frac{\exp(-2s)}{s+1} + \frac{\exp(-2) - \exp(-2s)}{s-1} & if\,\,\, x \neq 1 \\
\frac{5}{2}\exp(-2) & if\,\,\, x =1 \\
\end{cases}, $

\textbf{Example 3}: $c =0.5, \quad f(t) = 
\begin{cases} 
0 & if\,\,\, x <1 \\
0.5 & if\,\,\, x =1 \\
1 & if\,\,\, x>1
\end{cases}, 
 \quad g(s) = \frac{\exp(-s)}{s} $.

In all the simulations, we consider the grid $[c,7]\times [0,4]$ with $h_x = 0.25$ and $h_y = 0.025$. We approximate $\tilde{f}(t) =  e^{-ct}f(t) \in L^2(0,\infty)$ by $\tilde{f}_{\beta(\delta,g^\delta)}$ reconstructed from $u_{\beta(\delta,g^\delta)}$ for the choice $x = 0$, that is
\begin{equation}
\label{def f beta delta}
\forall t>0, \quad    \tilde{f}_{\beta(\delta,g^\delta)}(t)= \frac{2 e^{-ct}}{\pi}\int_{0}^{\infty} \cos(yt) u_{\beta(\delta,g^\delta)}(0,y) \mathrm{d}y = \frac{2\sqrt{2} e^{-ct}}{\sqrt{\pi}} \mathcal{R}eal \left(\mathcal{F}(u(0,\cdot). \right)
\end{equation}
In \eqref{def f beta delta}, we approximate $\tilde{f}_{\beta(\delta,g^\delta)}$ with the numerical procedure from \cite{bailey1994fast} using fast Fourier transform (FFT) algorithm. The noisy data $G^\delta$ is generated as 
$$
G^\delta = G + \eta  \vartheta, 
$$
where $\vartheta$ is a $(n_x +1)$-column vector of zero mean vector drawn from standard normal distribution and $\eta$ is a parameter allowing to control the noise level added. For each reconstruction, the relative error $Rel\_err$ is computed as 
$$
Rel\_err(u) = \frac{\norm{U-U_{\beta(\delta,g^\delta)}}_2}{\norm{U}_2}  \qquad Rel\_err(f) = \frac{\norm{\tilde{f}-\tilde{f}_{\beta(\delta,g^\delta)}}_2}{\norm{\tilde{f}}_2} 
$$

On Figure \ref{Fig 1} and \ref{Fig 2}, we illustrate the reconstruction $u_{\beta(\delta,g^\delta)}$ and $\tilde{f}_{\beta(\delta,g^\delta)}$ for $0.001$ percent noise level for each example.

On Table \ref{Tab 1}, we illustrate the relative error for $0.001$, $0.01$ and $0.1$ percent noise level for each example.

Figure \ref{Fig illustration quality reconstr} illustrates the quality of reconstruction achievable by the regularization approach described above.

In order to confirm the logarithmic convergence rates of the selection rules of the reconstruction error, On Figure \ref{Fig illus log conv rate}, we plot $\ln(rel\_err)$ versus $\ln(-\ln(\delta))$ for various values of $\delta$. We recall that if $rel\_err = O{f_p(\delta^2)}$ as $\delta \to 0$, then the curve $(\ln(-\ln(\delta)),\ln(rel\_err))$ should exhibit a line shape with slope equal to $-p$.

From Figure \ref{Fig illus log conv rate}, we can see that the reconstruction error actually exhibit a logarithmic rate with numerical order decreasing from Example 1 to Example 3. This confirms the theoretical results, since the smoothness of the target function $f$ decreases from Example 1 to Example 3.

\begin{figure}[h!]
\begin{center}
\includegraphics[scale=0.65]{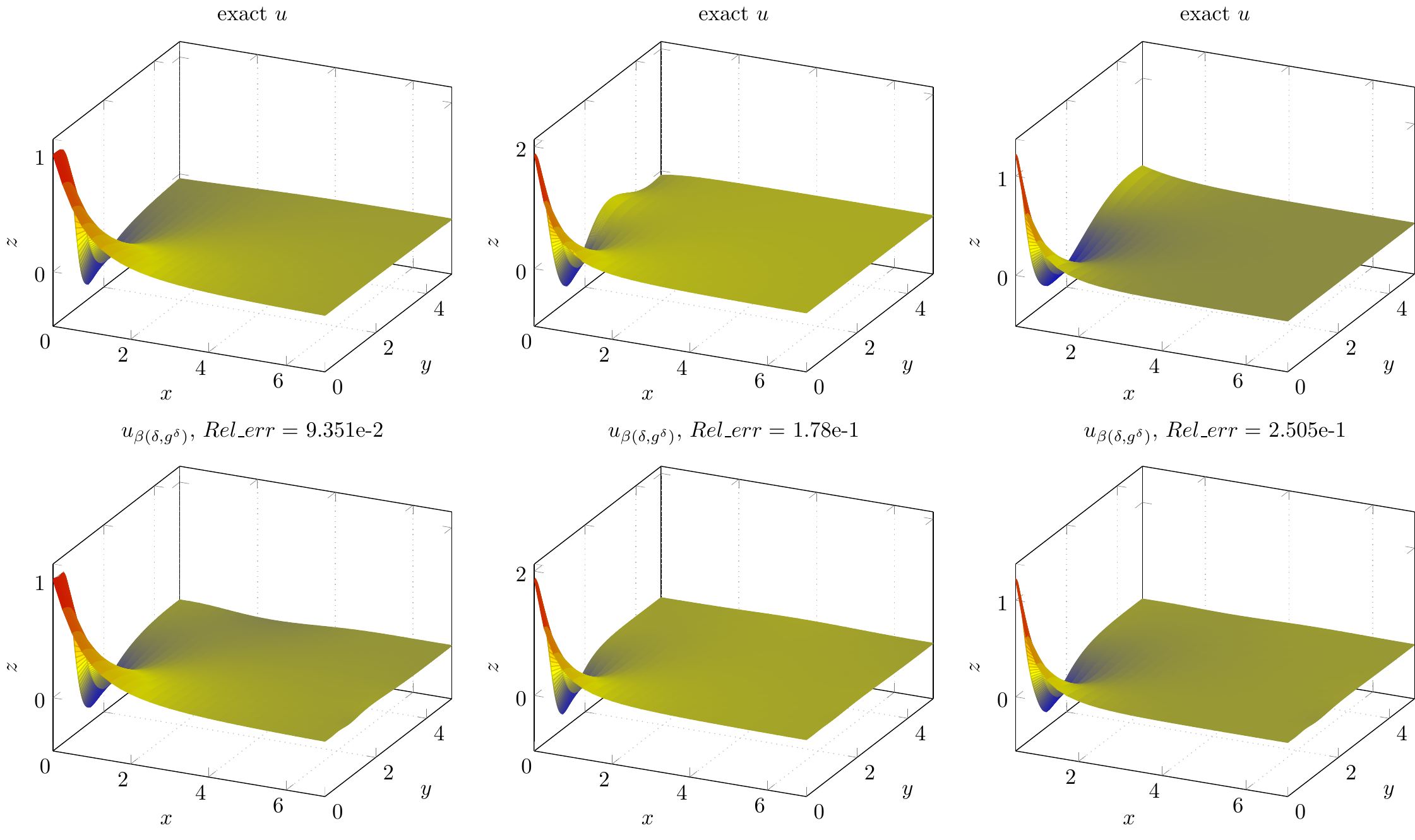} 
\end{center}
\caption{Illustration of reconstructed solution $u_{\beta(\delta,g^\delta)}$ for Example 1 (first column), 2 (second column) and 3 (third column)}
\label{Fig 1}
\end{figure}

\begin{figure}[h!]
\begin{center}
\includegraphics[scale=0.65]{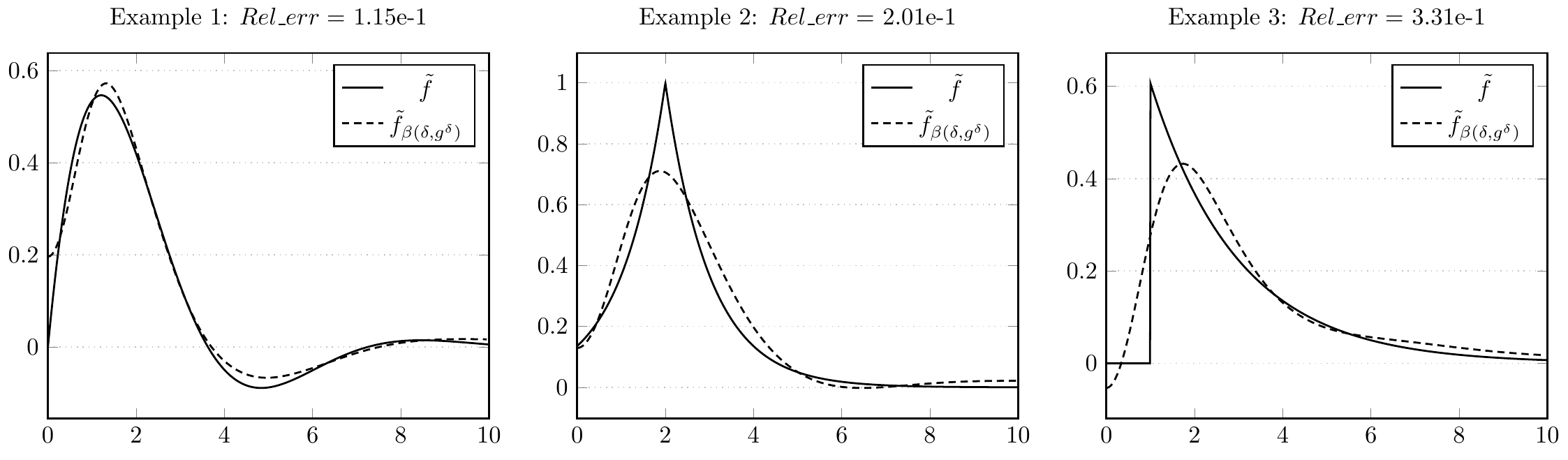} 
\end{center}
\caption{Illustration of reconstructed solution of $\tilde{f}_{\beta(\delta,g^\delta)}$ for Examples 1, 2 and 3}
\label{Fig 2}
\end{figure}

\begin{table}[]
    \centering
\includegraphics[scale=1]{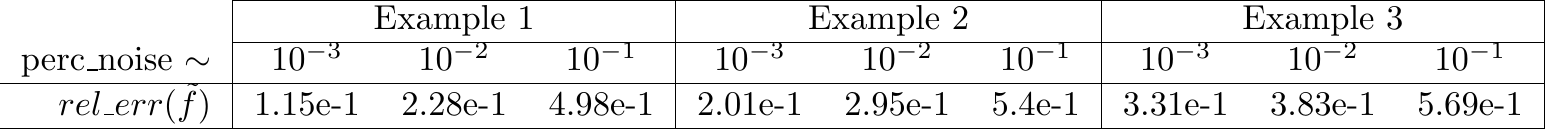} 
    \caption{Relative reconstruction error of $\tilde{f}$ for  $0.001$, $0.01$ and $0.1$ percent noise level}
    \label{Tab 1}
\end{table}

\begin{figure}[]
    \centering
\includegraphics[scale=1]{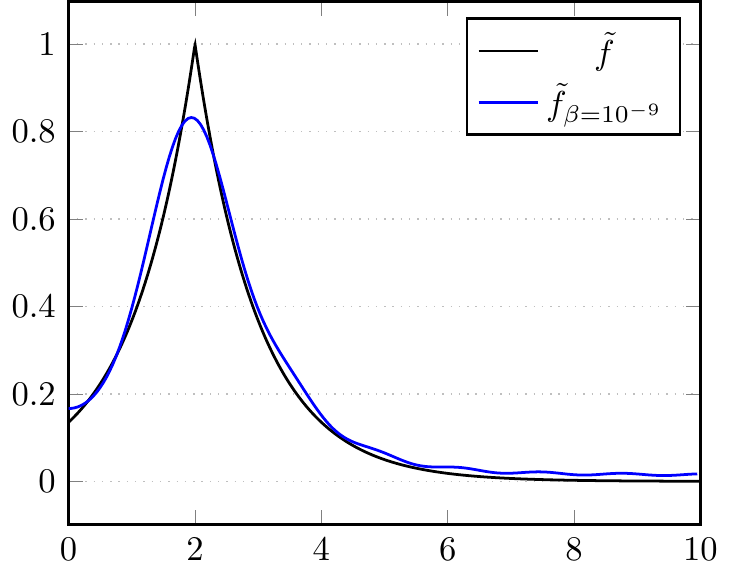} 
    \caption{Illustration approximation of $\tilde{f}$ for  $0.001$ percent noise level in Example 2 for $\beta  = 1e-9$.}
    \label{Fig illustration quality reconstr}
\end{figure}

\begin{figure}[]
    \centering
\includegraphics[scale=0.65]{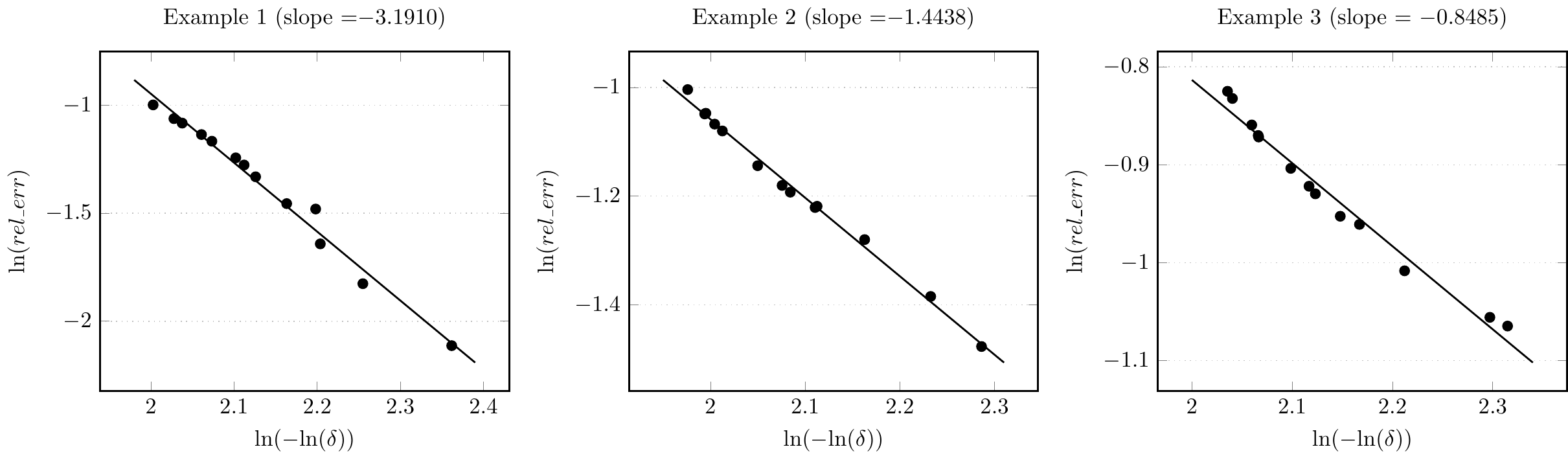} 
    \caption{Illustration numerical logarithmic convergence rates.}
    \label{Fig illus log conv rate}
\end{figure}

\appendix

\section{Unique continuation of Holomorphic functions}

Let $F(z)$ be a holomorphic function on
$\Pi_c=\set{z\in\Ce}{\re(z)>c}$, satisfying 
\bean
\label{fBound}
\left |F(z)\right| \leq M,
\quad \forall z\in \Pi_c.
\eean

\begin{proposition}\cite{ghanmi2022recovering}\rm
Denote $w\pm (z)=\frac{2}{\pi} (\frac{\pi}{2}\mp  arg(z-c))$,
the harmonic measures of the half line
$\set{z\in \Ce}{\re(z)>c,\, \im(z)=\{0\}}$ in 
$\segoo{c}{\infty}\times\Re_\pm$. They are the unique
solution to the systems
\bea
\Delta w^\pm (z) &= 0, \quad z\in \segoo{c}{\infty}\times \Re_\pm, \\
w(z) &=1, \quad \im(z)=0, \\
w(z) &=0, \quad { \re(z)= c}.
\eea
\end{proposition}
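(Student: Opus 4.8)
The plan is to verify directly that $w^\pm$ are harmonic and match the prescribed boundary data, and then to obtain uniqueness by conformally reducing to a classical Dirichlet problem on a strip.

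First I would observe that on each quarter-plane $\segoo{c}{\infty}\times\Re_\pm$ the branch point $z=c$ is excluded and the domain is simply connected, so a single-valued holomorphic branch of $\log(z-c)$ exists there. Since $\arg(z-c)=\im\log(z-c)$ is the imaginary part of a holomorphic function it is harmonic, and hence each $w^\pm$, being affine in $\arg(z-c)$, satisfies $\Delta w^\pm=0$ on its quarter-plane.

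Next I would check the boundary values by writing $z-c=re^{i\theta}$ with $\theta=\arg(z-c)$. On the half-line $\im(z)=0$, $\re(z)>c$ one has $\theta=0$, so $w^\pm=\tfrac{2}{\pi}\cdot\tfrac{\pi}{2}=1$; on the vertical half-line $\re(z)=c$ one has $\theta=\pi/2$ in the upper quarter-plane and $\theta=-\pi/2$ in the lower one, so that the sign convention $\mp$ in the definition gives $w^\pm=\tfrac{2}{\pi}(\tfrac{\pi}{2}-\tfrac{\pi}{2})=0$ in both cases. Since $0\le w^\pm\le 1$, these are the bounded harmonic functions realizing the harmonic measure of the half-line relative to the quarter-plane.

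For uniqueness I would consider the difference $v$ of any two \emph{bounded} harmonic solutions sharing these boundary data; then $v$ is bounded, harmonic, and vanishes on the entire boundary. The main obstacle is that the quarter-plane is unbounded, so the elementary maximum principle does not apply verbatim and, absent a growth restriction, uniqueness can genuinely fail. I would remove this obstacle through the conformal map $\zeta=\log(z-c)$, which sends the upper (resp.\ lower) quarter-plane onto the strip $\set{\zeta\in\Ce}{0<\im\zeta<\pi/2}$ (resp.\ $\set{\zeta\in\Ce}{-\pi/2<\im\zeta<0}$) and carries the two boundary rays onto the two edges of the strip. Harmonicity and boundedness being preserved, $v$ becomes a bounded harmonic function on a strip that vanishes on both edges, whereupon a Phragm\'en--Lindel\"of argument (equivalently, the maximum principle on the strip) forces $v\equiv 0$. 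This gives uniqueness within the class of bounded solutions, which is precisely the class in which the harmonic measure is used in~\eqref{origin}.
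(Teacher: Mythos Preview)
Your proof is correct. The paper itself does not prove this proposition; it merely cites \cite{ghanmi2022recovering} and uses the result as input for Theorem~\ref{thmUC}. Your direct verification therefore supplies what the paper outsources: harmonicity via $\arg(z-c)=\im\log(z-c)$, the boundary check, and uniqueness through the conformal map $\zeta=\log(z-c)$ onto a strip followed by Phragm\'en--Lindel\"of. You are also right to flag that the proposition, as stated, omits the boundedness hypothesis without which uniqueness fails (e.g.\ one could add any multiple of $\im(z-c)$); restricting to bounded solutions is exactly what is needed for the two-constants theorem invoked downstream, so your caveat is well placed.
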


\begin{theorem}\rm
\label{thmUC}
The function $F(z)$ satisfies
\begin{equation}
|F(z)| \leq  M \left(\frac{\ep}{M}\right)^{w^\pm(z)}, \quad 
\forall z\in\segoo{c}{\infty} \times \Re_\pm,
\end{equation}
where $\ep=\norm{F(z)}_{L^\infty(\segoo{c}{\infty}\times \{0\})}$.
\end{theorem}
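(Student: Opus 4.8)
The plan is to prove the pointwise estimate by a maximum-principle argument, comparing $\log\abs{F}$ with the harmonic function that interpolates between $\log M$ and $\log\ep$ via the harmonic measure $w^\pm$. Because the quarter-plane $\segoo{c}{\infty}\times\Re_\pm$ is unbounded, the decisive tool will be the Phragmén--Lindelöf principle rather than the bare maximum principle. I would first dispose of the degenerate cases: if $\ep=M$ the asserted bound reduces to $\abs{F}\leq M$, which holds by hypothesis (and $w^\pm\geq0$); if $\ep=0$, then $F$ vanishes on the open half-line $\segoo{c}{\infty}\times\{0\}$, which lies in the \emph{interior} of $\Pi_c$, so $F\equiv0$ by the identity theorem and the claim is trivial. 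Assume henceforth $0<\ep<M$.

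Next I would introduce the harmonic function
\[
h(z)=\log M+w^\pm(z)\,\log\!\big(\ep/M\big),
\]
which is harmonic on $\segoo{c}{\infty}\times\Re_\pm$ since $w^\pm$ is (by the Proposition), and which satisfies $h=\log\ep$ on the real half-line (where $w^\pm=1$) and $h=\log M$ on the vertical line $\re(z)=c$ (where $w^\pm=0$). The target inequality is exactly $\log\abs{F(z)}\leq h(z)$, so I set $u:=\log\abs{F}-h$; since $F$ is holomorphic, $\log\abs{F}$ is subharmonic (with value $-\infty$ at the zeros of $F$), hence $u$ is subharmonic on the open quarter-plane. I would then verify $\limsup u\leq0$ on the finite part of the boundary: on the real half-line $F$ is continuous (this segment is interior to $\Pi_c$) and $\abs{F}\leq\ep$ there by the definition of $\ep$, giving $u\leq0$; on the vertical line $\re(z)=c$, a subset of $\partial\Pi_c$, the global bound gives $\limsup_{z'\to z}\log\abs{F(z')}\leq\log M$, again $\limsup u\leq0$.

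The main obstacle is the passage to infinity, where the ordinary maximum principle fails and growth control is needed. Here $u$ is in fact bounded above, since $\log\abs{F}\leq\log M$ and $h\geq\log\ep$ force $u\leq\log(M/\ep)$ throughout; thus $u$ has growth order $0$, well below the critical order $r^{2}$ attached to a sector of opening $\pi/2$. To make the Phragmén--Lindelöf step rigorous I would use the positive harmonic barrier $\re\big((z-c)^{\gamma}\big)$ with $0<\gamma<1$ (strictly positive on the closed sector $0\leq\arg(z-c)\leq\pi/2$ and growing like $\abs{z-c}^{\gamma}$), subtract $\eta$ times it from $u$, apply the maximum principle on truncated sectors where the outer arc term tends to $-\infty$, and finally let $\eta\downarrow0$ to obtain $u\leq0$ on the whole quarter-plane. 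Exponentiating $u\leq0$ yields $\abs{F(z)}\leq M(\ep/M)^{w^\pm(z)}$; running the argument with $w^+$ on the upper and $w^-$ on the lower quarter-plane delivers both signs, completing the proof. I expect the barrier construction and the truncation/limit bookkeeping to be the only delicate points, everything else being a direct verification.
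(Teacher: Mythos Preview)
Your proposal is correct and is essentially the same argument the paper uses: the paper simply invokes the two constants theorem of Nevanlinna, which is precisely the harmonic-measure/Phragm\'en--Lindel\"of comparison you have spelled out in detail. You have unpacked the black box, and your handling of the unbounded quarter-plane via the barrier $\re\big((z-c)^{\gamma}\big)$ with $0<\gamma<1$ is the standard way to justify that theorem in this geometry.
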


\begin{proof}
Recall that $F$ is holomorphic and satisfies \eqref{fBound}.
The holomorphic unique continuation of the function $z\rightarrow F(z)$
using the two constant Theorem \cite{nevanlinna1970analytic}, gives 
\bea
|F(z)| \leq M^{1-w^\pm(z)} |F(z)|^{w^\pm(z)}_{L^\infty(]c, \infty[ \times \{0\} )} \leq M \left(\frac{\ep}{M}\right)^{w^\pm(z)}, \quad 
\forall z\in  ]c,\infty[ \times \R_\pm,
\eea
where $\ep=\|F(z)\|_{L^\infty(]c, \infty[ \times \{0\} )}$.
\end{proof}

\section{Proof of Lemma~\ref{lemma completion lemma alibaud}}

\begin{enumerate}
\item[(i)]
This follows readily from \eqref{cond on conv kernel phi}.
\item[(ii)]
Let $p \in \Re$ and $w \in H^p(\Re)$,
if $p\leq s$,
\begin{eqnarray*}
\norm{(I - C_\beta)w}_{L^2}^2
&=&
\int_{\Re} |1-\widehat{\varphi}(\beta \xi)|^{2(1-p/s)}\,  |1-\widehat{\varphi}(\beta \xi)|^{2p/s} |\widehat{w}(\xi)|^2 \ud \xi \quad \text{by Parseval identity}\\
& \leq &
(1+\norm{\varphi}_{L^1(\Re)})^{2(1-p/s)} \int_{\Re} |1-\widehat{\varphi}(\beta \xi)|^{2p/s} |\widehat{w}(\xi)|^2 \ud \xi\\
&=&
(1+\norm{\varphi}_{L^1(\Re)})^{2(1-p/s)}\int_{\Re} |1-\widehat{\varphi}(\beta \xi/\vert \xi \vert)|^{2p/s} \mod{\frac{|1-\widehat{\varphi}(\beta \xi)|^2}{|1-\widehat{\varphi}(\beta \xi/\vert \xi \vert)|^{2}} }^{p/s} |\widehat{w}(\xi)|^2 \ud \xi \\
&\leq &
(1+\norm{\varphi}_{L^1(\Re)})^{2(1-p/s)} C_0^{p/s} M_\beta^{p/s}  \int_{\Re} \vert \xi \vert^{2p} |\widehat{w}(\xi)|^2 \ud \xi \quad \text{from} \,\, \eqref{def m_beta and M_beta} \,\, \text{and} \,\, \eqref{key estimate alibaud} \\
&\leq &
(1+\norm{\varphi}_{L^1(\Re)})^{2(1-p/s)} C_0^{p/s} M_\beta^{p/s}  \int_{\Re} (1+\vert \xi \vert^{2})^p |\widehat{w}(\xi)|^2 \ud \xi \\
& \leq &
C^{(1)} \beta^{2p}\norm{f}_{H^p}^2 \quad \text{using (ii)}.
\end{eqnarray*}
For $p>s$,
\begin{eqnarray*}
\norm{(I - C_\beta)w}_{L^2}^2
&=&
\int_{\Re} |1-\widehat{\varphi}(\beta \xi/\vert \xi \vert)|^2 \frac{|1-\widehat{\varphi}(\beta \xi)|^2}{|1-\widehat{\varphi}(\beta \xi/\vert \xi \vert)|^2} |\widehat{w}(\xi)|^2 \ud \xi\\
&\leq &
M_\beta C_0 \int_{\Re} \vert \xi \vert^{2s} |\widehat{w}(\xi)|^2 \ud \xi
\quad \text{from} \,\, \eqref{def m_beta and M_beta} \,\, \text{and} \,\, \eqref{key estimate alibaud} \\
&\leq  & 
C^{(1)} \beta^{2s}\norm{w}_{H^s}^2 \quad \text{using}\quad (i).
\end{eqnarray*}
\item[(iii)]
Let $p\in\Re$ and $w \in H^{2p}(\Re)$, if $p\leq s$, 
\begin{eqnarray*}
\lefteqn{\norm{(I - C_\beta)^*(I - C_\beta)w}_{L^2}^2}\\
&=&
\int_{\Re} |1-\widehat{\varphi}(\beta \xi)|^{4(1-p/s)}\,  |1-\widehat{\varphi}(\beta \xi)|^{4p/s} |\widehat{w}(\xi)|^2 \ud\xi\\
&\leq &
\tilde{C} \int_{\Re} |1-\widehat{\varphi}(\beta \xi)|^{4p/s} |\widehat{w}(\xi)|^2 \ud \xi \quad \text{with} \quad \tilde{C} = (1 + \norm{\varphi}_{L^1(\Re)})^{4(1-p/s)}\\
&=&
\tilde{C} \int_{\Re} |1-\widehat{\varphi}(\beta \xi/\vert \xi \vert)|^{4p/s} \mod{\frac{|1-\widehat{\varphi}(\beta \xi)|^2}{|1-\widehat{\varphi}(\beta \xi/\vert \xi \vert)|^{2}} }^{2p/s} |\widehat{w}(\xi)|^2 \ud \xi \\
&\leq &
\tilde{C} C_0^{2p/s} M_\beta^{2p/s}  \int_{\Re} \vert \xi \vert^{4p} |\widehat{w}(\xi)|^2 \ud \xi \quad (\text{from} \,\, \eqref{def m_beta and M_beta} \,\, \text{and} \,\, \eqref{key estimate alibaud}) \\
&\leq &
C^{(2)} \beta^{4p}\norm{w}_{H^{2p}(\Re)}^2 \quad (\text{using (ii)}).
\end{eqnarray*}
For $p>s$, 
\begin{eqnarray*}
\label{eqxbis}
\norm{(I - C_\beta)^*(I - C_\beta)w}_{L^2}^2
&=&
\int_{\Re} |1-\widehat{\varphi}(\beta \xi/\vert \xi \vert)|^4 \mod{\frac{|1-\widehat{\varphi}(\beta \xi)|^2}{|1-\widehat{\varphi}(\beta \xi/\vert \xi \vert)|^2}}^2 |\widehat{w}(\xi)|^2\ud\xi\\
&\leq &
M_\beta^2 C_0^2 \int_{\Re} \mod{\xi}^{4s} |\widehat{w}(\xi)|^2\ud\xi
\quad(\text{from}\;\;
\eqref{def m_beta and M_beta}
\;\;\text{and}\;\;
\eqref{key estimate alibaud}) \\
&\leq &
C^{(2)} \beta^{4s}\norm{w}_{H^{2s}}^2
\quad (\text{using (i)}).
\end{eqnarray*}
\end{enumerate}

\section{A technical lemma}

The following lemma exhibits some estimates about
the logarithmic source function $f_q$ which is used repeatedly in the paper.
\begin{lemma}\sf
\label{Lemma prop log sour func}
Let $q>0$ and the function $f_q$ defined by
$ \forall t \in (0,1]$, $f_q(t) = (-\ln(t))^{-q}$.
Then for all $a,b >0$ we have
\begin{equation}
\label{Property log sourc function}
\begin{cases}
\vspace{0.1cm}
\text{if}\,\, \lambda \leq 1, & \forall t \in (0,1), \quad f_q(\lambda t^a) \leq \max \left\lbrace 1,\left(\frac{b}{a}\right)^q \right\rbrace  f_q(\lambda t^b)\\\vspace{0.1cm}
\text{if}\,\, \lambda >1,     & \forall t \in (0, \lambda^{-\frac{2}{a}}), \quad f_q(\lambda t^a) \leq \max \left\lbrace 1, \left( \frac{2b-a}{a}\right)^q \right\rbrace  f_q(\lambda t^b)
\end{cases}
\end{equation}
Moreover, 
\begin{equation}
\label{Property log sourc function 2}
\begin{cases}
\vspace{0.cm}
\text{if}\,\, \lambda \leq 1, & \forall t \in (0,1), \quad f_q(\lambda t) \leq   f_q(t)\\\vspace{0.cm}
\text{if}\,\, \lambda >1,     & \forall t \in (0, \lambda^{-2}), \quad f_q(\lambda t) \leq 2^q   f_q(t).
\end{cases}
\end{equation}
\end{lemma}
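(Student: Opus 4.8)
The plan is to linearize everything by the substitution $L=-\ln t$ and $\mu=-\ln\lambda$. Since $t\in(0,1)$ we have $L>0$, and for any exponent $c>0$ the argument $\lambda t^{c}$ lies in $(0,1)$ exactly when $\mu+cL>0$, in which case $f_q(\lambda t^{c})=\bigl(-\ln(\lambda t^{c})\bigr)^{-q}=(\mu+cL)^{-q}$. Consequently, whenever both sides are defined,
\begin{equation*}
\frac{f_q(\lambda t^{a})}{f_q(\lambda t^{b})}=\left(\frac{\mu+bL}{\mu+aL}\right)^{q}.
\end{equation*}
Because $q>0$ and all quantities are positive, every inequality of the form $f_q(\lambda t^{a})\leq C\,f_q(\lambda t^{b})$ reduces to the purely algebraic statement $(\mu+bL)/(\mu+aL)\leq C^{1/q}$. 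Thus the whole lemma is reduced to bounding a ratio of two affine functions of $L$, and the exponent $q$ plays no further role. This reduction is the single device I would use for both displays.

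For \eqref{Property log sourc function} in the case $\lambda\leq1$ one has $\mu\geq0$, and I would split on the sign of $b-a$. If $b\leq a$ then $\mu+bL\leq\mu+aL$ gives ratio $\leq1$, matching $\max\{1,b/a\}$. If $b>a$, cross-multiplying $(\mu+bL)/(\mu+aL)\leq b/a$ is equivalent, after cancelling the common term $abL$, to $(b-a)\mu\geq0$, which holds since $\mu\geq0$; hence the ratio is at most $b/a$. Raising to the power $q$ yields the stated constant $\max\{1,(b/a)^{q}\}$.

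For the case $\lambda>1$ I would write $m=\ln\lambda>0$, so $\mu=-m$ and $f_q(\lambda t^{c})=(cL-m)^{-q}$. Here the role of the domain restriction $t\in(0,\lambda^{-2/a})$ becomes essential: it is exactly $aL>2m$, which forces $aL-m>m>0$, so the denominator is positive (and bounded away from the singularity $\lambda t^{a}=1$). Splitting again on $b-a$: for $b\leq a$ the ratio $(bL-m)/(aL-m)$ is $\leq1$; for $b>a$ the inequality $(bL-m)/(aL-m)\leq(2b-a)/a$ cross-multiplies and simplifies to $(b-a)(aL-2m)\geq0$, which holds precisely because $b>a$ and $aL>2m$. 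This is the place where the factor $2$ in the exponent $\lambda^{-2/a}$ is used, and I expect this bookkeeping—checking that every argument stays inside $(0,1)$ so that $f_q$ is defined, while keeping the denominators positive—to be the main (though routine) obstacle.

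Finally, \eqref{Property log sourc function 2} I would treat directly rather than as a corollary, since its two $\lambda$'s differ. For $\lambda\leq1$ it is just monotonicity: $f_q$ is increasing on $(0,1)$ because $-\ln$ is decreasing and $x\mapsto x^{-q}$ is decreasing, and $\lambda t\leq t$. For $\lambda>1$, with $L=-\ln t$ and $m=\ln\lambda$, the restriction $t<\lambda^{-2}$ is $L>2m$, whence $L-m>L/2$ and $f_q(\lambda t)/f_q(t)=\bigl(L/(L-m)\bigr)^{q}<2^{q}$. This completes all four inequalities.
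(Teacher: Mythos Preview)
Your proof is correct and takes essentially the same approach as the paper: both linearize via logarithms and reduce everything to bounding a ratio of two affine expressions in $-\ln t$. The only cosmetic difference is that the paper works with the single variable $x=\ln t/\ln\lambda$ and argues by monotonicity of $j(x)=((1+ax)/(1+bx))^{-q}$, whereas you keep $L=-\ln t$ and $\mu=-\ln\lambda$ separate and cross-multiply directly—arriving at the same key inequality $(b-a)(aL-2m)\geq 0$ in the case $\lambda>1$.
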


\begin{proof} We have
\begin{equation}
\label{eqref1}
\forall t \in (0,1), \quad \frac{f_q(\lambda t^a)}{f_q(\lambda t^b)}  = j(x) := \left( \frac{ 1 + a x}{1 + bx} \right)^{-q}, \quad \text{with} \quad x = \frac{\ln t}{\ln \lambda}.
\end{equation}
If $\lambda \in (0,1)$ then $x>0$. For $a \geq b$,  $j(x)$ is obviously less than $1$. For $a < b$ the function $j$ is increasing $j(x)$ is bounded by above on $\Re^+$ by $\lim_{x \to \infty} j(x) = (a/b)^{-q}$.\\
Now for $\lambda >1$, and $t \in (0, \lambda^{-\frac{2}{a}})$, we have $x \leq -2/a$. For $a \geq b$, then right hand side in \eqref{eqref1} is bounded above by $1$. For $a < b$, the function $j$ is increasing and thus bounded by above on $(-\infty,-2/a)$ by $j(-2/a) = \left( \frac{a}{2b-a}\right)^{-q}$.
The first inequality in \eqref{Property log sourc function 2} follows readily from the fact the function $f_q$ is increasing on $(0,1)$. For the second inequality, we have
\begin{equation}
\label{eqref2}
\forall t \in (0,1), \quad \frac{f_q(\lambda t)}{f_q(t)}  = k(x) := \left( 1 + y \right)^{-q}, \quad \text{with} \quad y = \frac{\ln \lambda}{\ln t}.
\end{equation}
For $t \leq \lambda^{-2}$, $y>-1/2$, and since the function $k$ in \eqref{eqref2} is decreasing, we deduce that for all $y>-1/2$, $k(y) \leq k(-1/2) = 2^q$ whence the second inequality in \eqref{Property log sourc function 2}.
\end{proof}

\bibliographystyle{abbrv}
\bibliography{laplace}

\begin{thebibliography}{10}

\bibitem{alibaud2009variational}
N.~Alibaud, P.~Mar{\'e}chal, and Y.~Saesor.
\newblock A variational approach to the inversion of truncated fourier
  operators.
\newblock {\em Inverse Problems}, 25(4):045002, 2009.

\bibitem{bailey1994fast}
D.~H. Bailey and P.~N. Swarztrauber.
\newblock A fast method for the numerical evaluation of continuous fourier and
  laplace transforms.
\newblock {\em SIAM Journal on Scientific Computing}, 15(5):1105--1110, 1994.

\bibitem{bonnefond2009variational}
X.~Bonnefond and P.~Mar{\'e}chal.
\newblock A variational approach to the inversion of some compact operators.
\newblock {\em Pacific journal of optimization}, 5(1):97--110, 2009.

\bibitem{boumenir1998inverse}
A.~Boumenir and A.~Al-Shuaibi.
\newblock The inverse laplace transform and analytic pseudo-differential
  operators.
\newblock {\em Journal of mathematical analysis and applications},
  228(1):16--36, 1998.

\bibitem{davies1979numerical}
B.~Davies and B.~Martin.
\newblock Numerical inversion of the laplace transform: a survey and comparison
  of methods.
\newblock {\em Journal of computational physics}, 33(1):1--32, 1979.

\bibitem{dubner1968numerical}
H.~Dubner and J.~Abate.
\newblock Numerical inversion of laplace transforms by relating them to the
  finite fourier cosine transform.
\newblock {\em Journal of the ACM (JACM)}, 15(1):115--123, 1968.

\bibitem{durbin1974numerical}
F.~Durbin.
\newblock Numerical inversion of laplace transforms: an efficient improvement
  to dubner and abate's method.
\newblock {\em The Computer Journal}, 17(4):371--376, 1974.

\bibitem{engl1996regularization}
H.~W. Engl, M.~Hanke, and A.~Neubauer.
\newblock {\em Regularization of inverse problems}, volume 375.
\newblock Springer Science \& Business Media, 1996.

\bibitem{epstein2008bad}
C.~L. Epstein and J.~Schotland.
\newblock The bad truth about laplace's transform.
\newblock {\em SIAM review}, 50(3):504--520, 2008.

\bibitem{ghanmi2022recovering}
C.~Ghanmi, S.~M. Aouadi, and F.~Triki.
\newblock Recovering the initial condition in the one-phase stefan problem.
\newblock {\em Discrete and Continuous Dynamical Systems-S}, 15(5):1143--1164,
  2022.

\bibitem{hardy1933constants}
G.~Hardy.
\newblock The constants of certain inequalities.
\newblock {\em Journal of the London Mathematical Society}, 1(2):114--119,
  1933.

\bibitem{hohage2000regularization}
T.~Hohage.
\newblock Regularization of exponentially ill-posed problems.
\newblock {\em Numerical functional analysis and optimization},
  21(3-4):439--464, 2000.

\bibitem{kian2021logarithmic}
Y.~Kian, {\'E}.~Soccorsi, and F.~Triki.
\newblock Logarithmic stable recovery of the source and the initial state of
  time fractional diffusion equations.
\newblock {\em arXiv preprint arXiv:2112.10835}, 2021.

\bibitem{lederman2016stability}
R.~R. Lederman and S.~Steinerberger.
\newblock Stability estimates for truncated fourier and laplace transforms.
\newblock {\em arXiv preprint arXiv:1605.03866}, 2016.

\bibitem{lele1992compact}
S.~K. Lele.
\newblock Compact finite difference schemes with spectral-like resolution.
\newblock {\em Journal of computational physics}, 103(1):16--42, 1992.

\bibitem{marechal2023mollifier}
P.~Mar{\'e}chal, W.~Simo Tao~Lee, and F.~Triki.
\newblock A mollifier approach to regularize a cauchy problem for the
  inhomogeneous helmholtz equation.
\newblock {\em Journal of Inverse and Ill-posed Problems}, (0), 2023.

\bibitem{mathe2003geometry}
P.~Math{\'e} and S.~V. Pereverzev.
\newblock Geometry of linear ill-posed problems in variable hilbert scales.
\newblock {\em Inverse problems}, 19(3):789, 2003.

\bibitem{mcwhirter1978numerical}
J.~McWhirter and E.~R. Pike.
\newblock On the numerical inversion of the laplace transform and similar
  fredholm integral equations of the first kind.
\newblock {\em Journal of Physics A: Mathematical and General}, 11(9):1729,
  1978.

\bibitem{mehra2017suite}
M.~Mehra and K.~Patel.
\newblock A suite of compact finite difference schemes.
\newblock {\em ACM Trans. Math. Softw}, 44(2):1--31, 2017.

\bibitem{nevanlinna1970analytic}
R.~Nevanlinna, H.~Behnke, H.~Grauert, L.~V. Ahlfors, D.~C. Spencer, L.~Bers,
  K.~Kodaira, M.~Heins, and J.~A. Jenkins.
\newblock {\em Analytic functions}, volume~11.
\newblock Springer, 1970.

\bibitem{rudin1974real}
W.~Rudin.
\newblock {\em Real and Complex Analysis, 3rd Edition}.
\newblock McGraw Hill, 1974.

\bibitem{schock1985approximate}
E.~Schock.
\newblock Approximate solution of ill-posed equations: arbitrarily slow
  convergence vs. superconvergence.
\newblock In {\em Constructive methods for the practical treatment of integral
  equations}, pages 234--243. Springer, 1985.

\bibitem{setterqvist2005unitary}
E.~Setterqvist.
\newblock Unitary equivalence: a new approach to the laplace transform and the
  hardy operator, 2005.

\bibitem{talbot1979accurate}
A.~Talbot.
\newblock The accurate numerical inversion of laplace transforms.
\newblock {\em IMA Journal of Applied Mathematics}, 23(1):97--120, 1979.

\bibitem{tautenhahn1998optimality}
U.~Tautenhahn.
\newblock Optimality for ill-posed problems under general source conditions.
\newblock {\em Numerical Functional Analysis and Optimization},
  19(3-4):377--398, 1998.

\end{thebibliography}

\end{document}